\documentclass[10pt,a4paper]{amsart}
\usepackage{amssymb,amsmath,amsfonts}

\textwidth=16.000cm \textheight=22.000cm \topmargin=0.00cm
\oddsidemargin=0.00cm \evensidemargin=0.00cm \headheight=14.4pt
\headsep=1.2500cm \numberwithin{equation}{section}
\hyphenation{semi-stable} \emergencystretch=11pt

%%% ----------------------------------------------------------------------
\newcommand{\set}[1]{\left\{#1\right\}}
\newcommand{\A}{UP(I,\mathcal{A})}
\newtheorem{Theorem}{Theorem}[section]
\newtheorem{Proposition}[Theorem]{Proposition}
\newtheorem{cor}[Theorem]{Corollary}
\newtheorem{lemma}[Theorem]{Lemma}
\theoremstyle{remark}
\newtheorem{Definition}[Theorem]{Definition}
\newtheorem{Example}[Theorem]{Example}

\newtheorem{Remark}[Theorem]{Remark}

%----------------------------------------------------------
\begin{document}
\title{Johnson pseudo-Connes amenability of dual Banach algebras}

\author[S. F. Shariati]{S. F. Shariati}

\address{Faculty of Mathematics and Computer Science,
	Amirkabir University of Technology, 424 Hafez Avenue, 15914
	Tehran, Iran.}

\email{f.shariati@aut.ac.ir}

\author[A. Pourabbas]{A. Pourabbas}
\email{arpabbas@aut.ac.ir}

\author[A. Sahami]{A. Sahami}
\address{Department of Mathematics Faculty of Basic Science, Ilam University, P.O. Box 69315-516 Ilam, Iran.}
\email{amir.sahami@aut.ac.ir}

\keywords{Johnson pseudo-Connes amenability, Connes amenability, matrix algebra, dual Banach algebra. }

\subjclass[2010]{Primary 46H20, 46M10, Secondary 46H25, 43A10.}

%\maketitle
%-----------------------------------------------------------------------------------------
%%%%%%%%%%%%%%%%%%%%%%%%%%%%%%%%%%%%%%%%%%%%%
\begin{abstract}
We introduce the notion of Johnson pseudo-Connes amenability for dual Banach algebras. We study the relation between this new notion with the various notions of Connes amenability like Connes amenability, approximate Connes amenability and pseudo Connes amenability. We also investigate some hereditary properties of this new notion.   We prove that for a locally compact group $G$, $M(G)$ is Johnson pseudo-Connes amenable if and only if $G$ is amenable. Also we show that for every non-empty set $I$, $\mathbb{M}_I(\mathbb{C})$ under this new notion is forced to have a finite index. Finally, we provide some examples of certain dual Banach algebras and we study their Johnson pseudo-Connes amenability. 
\end{abstract}
\maketitle
%-----------------------------------------------------------------------------------------
\section{Introduction and Preliminaries}
The concept of amenability for a Banach algebra was first introduced by B. E. Johnson \cite{Runde:2002}. A Banach algebra $\mathcal{A}$ is amenable if and only if $\mathcal{A}$ has a virtual diagonal, that is, there exists an element $M$ in $(\mathcal{A}\hat{\otimes}\mathcal{A})^{**}$ such that $a\cdot M=M\cdot a$ and $\pi_{\mathcal{A}}^{**}(M)a=a$, where $\pi_{\mathcal{A}}:\mathcal{A}\hat{\otimes}\mathcal{A}\rightarrow\mathcal{A}$ is a bounded $\mathcal{A}$-bimodule morphism defined by $\pi_{\mathcal{A}}(a\otimes{b})=ab$ for every $a,b\in{\mathcal{A}}$. 

The class of dual Banach algebras were introduced by Runde \cite{Runde:2001}. Let $\mathcal{A}$ be a Banach algebra and let $E$ be a Banach $\mathcal{A}$-bimodule. An $\mathcal{A}$-bimodule $E$ is called dual if there exists  a closed submodule ${E}_{\ast}$ of ${E}^{\ast}$ such that $E=(E_{\ast})^{\ast}$. The Banach algebra $\mathcal{A}$ is called dual if it is dual as a Banach $\mathcal{A}$-bimodule. The measure algebra $M(G)$ of a locally compact group $G$, the algebra of bounded operators $\mathcal{B}(E)$, for a reflexive Banach space $E$ and the second dual $\mathcal{A}^{\ast\ast}$ of an Arens regular Banach algebra $\mathcal{A}$ are examples of dual Banach algebras.

For a given dual Banach algebra $\mathcal{A}$ and a Banach $\mathcal{A}$-bimodule $E$, we denote by $\sigma{wc}(E)$, the set of all elements $x\in{E}$ such that the module maps $\mathcal{A}\rightarrow{E}$; ${a}\mapsto{a}\cdot{x}$ and ${a}\mapsto{x}\cdot{a}$
are $wk^\ast$-$wk$-continuous, which is a closed submodule of $E$. Since $\sigma{wc}(\mathcal{A}_{\ast})=\mathcal{A}_{\ast}$, the adjoint of $\pi_\mathcal{A}$ maps $\mathcal{A}_{\ast}$ into $\sigma{wc}(\mathcal{A}\hat{\otimes}\mathcal{A})^{\ast}$. Therefore, $\pi_\mathcal{A}^{\ast\ast}$ drops to an $\mathcal{A}$-bimodule morphism $\pi_{\sigma{wc}}:(\sigma{wc}(\mathcal{A}\hat{\otimes}\mathcal{A})^{\ast})^{\ast}\rightarrow\mathcal{A}$. A dual Banach algebra $\mathcal{A}$ is called Connes amenable if and only if $\mathcal{A}$ has an $\sigma{wc}$-virtual diagonal, that is, there exists an element $M\in{(\sigma{wc}(\mathcal{A}\hat{\otimes}\mathcal{A})^{*})^*}$ such that	$a\cdot{M}=M\cdot{a}$ and ${a}\pi_{\sigma{wc}}(M)=a$ for every $a\in{\mathcal{A}}$ \cite{Runde:2004}. Some new generalizations of Connes amenability like approximate Connes amenability and pseudo-Connes amenability have been introduced  by Esslamzadeh \cite{Eslam:2012} and  Mahmoodi 
 \cite{Mahmoodi:14}. A unital dual Banach algebra $\mathcal{A}$ is approximate Connes amenable if and only if there exists a net $(M_{\alpha})$ in ${(\sigma{wc}(\mathcal{A}\hat{\otimes}\mathcal{A})^{*})^*}$ such that 
$a\cdot M_{\alpha}-M_{\alpha}\cdot a\rightarrow0$ and $\pi_{\sigma{wc}}(M_{\alpha})a\rightarrow a$ for every $a\in{\mathcal{A}}$ \cite[Theorem 3.3]{Eslam:2012}. Also the dual Banach algebra $\mathcal{A}$ is called pseudo-Connes amenable if there exists a net $(M_{\alpha})$ in $\mathcal{A}\hat{\otimes}\mathcal{A}$ such that $a\cdot M_{\alpha}-M_{\alpha}\cdot a\overset{wk^*}{\rightarrow}0$ in ${(\sigma{wc}(\mathcal{A}\hat{\otimes}\mathcal{A})^{*})^*}$ and $\pi_{\sigma{wc}}(M_{\alpha})a\overset{wk^*}{\rightarrow}a$ in $\mathcal{A}$ \cite[Definition 4.3]{Mahmoodi:14}.

The notion of Johnson pseudo-contractibility for a Banach algebra was introduced by second and third authors, which is a weaker notion than amenability and pseudo-contractibility but it is stronger than pseudo-amenability \cite{Sahami:2017}. A Banach algebra $\mathcal{A}$ is called Johnson pseudo-contractible, if there exists a not necessarily bounded net $(m_{\alpha})$ in $(\mathcal{A}\hat{\otimes}\mathcal{A})^{**}$ such that $a\cdot{m_{\alpha}}={m_{\alpha}}\cdot{a}$ and $\pi_{\mathcal{A}}^{**}(m_{\alpha}){a}\rightarrow{a}$ for every $a\in{\mathcal{A}}$. They also  showed that for a 
locally compact group $G$, $M(G)$ is Johnson pseudo-contractible if and only if $G$ is discrete and amenable \cite[Proposition 3.3]{Sahami:2017}. They characterized the Johnson pseudo-contractibility of $\ell^1(S)$, where $S$ is a uniformly locally finite inverse semigroup \cite[Theorem 2.3]{saham:17}. They showed that for a Brandt semigroup $S=M^0(G,I)$ over a non-empty set $I$, $\ell^1(S)$ is Johnson pseudo-contractible if and only if $G$ is amenable and $I$ is finite \cite[Theorem 2.4]{saham:17}.

Motivated by these results, we introduce the new notion for dual Banach algebras, which is a weaker notion than Connes amenability and it is a stronger notion  than approximate Connes amenability and pseudo-Connes amenability.
\begin{Definition}	
	A dual Banach algebra $\mathcal{A}$ is called Johnson pseudo-Connes amenable, if there exists a not necessarily bounded net $(m_{\alpha})$ in $(\mathcal{A}\hat{\otimes}\mathcal{A})^{**}$ such that $\langle{T},a\cdot{m_{\alpha}}\rangle=\langle{T},{m_{\alpha}}\cdot{a}\rangle$ and $i^{\ast}_{\mathcal{A}_{\ast}}\pi_{\mathcal{A}}^{**}(m_{\alpha}){a}\rightarrow{a}$ for every $a\in{\mathcal{A}}$ and $T\in\sigma{wc}(\mathcal{A}\hat{\otimes}\mathcal{A})^{\ast}$, where $i_{\mathcal{A}_{\ast}}:\mathcal{A}_{\ast}\hookrightarrow\mathcal{A}^{\ast}$ is the canonical embedding.
\end{Definition}
It is clear that every Johnson pseudo-contractible dual Banach algebra is Johnson pseudo-Connes amenable.

 In this paper we investigate the relation between a new notion of Johnson pseudo-Connes amenability to various notions of Connes amenability.  Indeed we show that for a dual Banach algebra $\mathcal{A}$, Johnson pseudo-Connes amenability of $\mathcal{A}$ implies $\varphi$-Connes amenability, where $\varphi$ is a ${wk}^{\ast}$-continuous character on $\mathcal{A}$. Using this tool we show that for a finite set $I$ and a dual Banach algebra $\mathcal{A}$ with the non-empty $wk^*$-continuous character space, a class of $I\times{I}$-upper triangular matrix $UP(I,\mathcal{A})$ is Johnson pseudo-Connes amenable if and only if $\mathcal{A}$ is Johnson pseudo-Connes amenable and $I$ is singleton.  
 
 As an application we prove that for the arbitrary set $I$, the Banach algebra of $I\times I$-matrices over $\mathbb{C}$, $M_{I}(\mathbb{C})$ is Johnson pseudo-Connes amenable if and only if $I$ is finite. Also we show that for a locally compact group $G$, $M(G)$ is Johnson pseudo-Connes amenable if and only if $G$ is amenable. This result distinguishes our new notion with Johnson pseudo-contractibility. Finally, we provide some examples of certain dual Banach algebras and we study their Johnson pseudo-Connes amenability.

%-----------------------------------------------------------------------------------------
%%%%%%%%%%%%%%%%%%%%%%%%%%%%%%%%%%%%%%%%%%%%%
\section{Johnson pseudo-Connes amenability} 

 For a Banach algebra  $\mathcal{A}$, the projective tensor product $\mathcal{A}\hat{\otimes}\mathcal{A}$ is a Banach $\mathcal{A}$-bimodule via the following actions
\begin{equation*}
a\cdot(b\otimes c)=ab\otimes c,\qquad(a\otimes b)\cdot c=a\otimes bc\quad(a,b,c\in{\mathcal{A}}).
\end{equation*}
If $E$ is a Banach $\mathcal{A}$-bimodule, then $E^*$ is also a Banach $\mathcal{A}$-bimodule via the following actions
\begin{equation*}
(a\cdot f)(x)=f(x\cdot a),\qquad(f\cdot a)(x)=f(a\cdot x)\quad(a\in{\mathcal{A}},x\in{E},f\in{E^*}).
\end{equation*}
\begin{Remark}\label{qou}
Let $\mathcal{A}$ be a dual Banach algebra and let $E$ be a Banach $\mathcal{A}$-bimodule. Since $\sigma wc(E^*)$ is a closed $\mathcal{A}$-submodule of $E^{*}$, we have a quotient map $q:E^{\ast\ast}\longrightarrow\sigma{wc}(E^{\ast})^{\ast}$ is defined by $q(M)=M\vert_{\sigma wc(E^*)}$ for every $M\in{E^{**}}$.
\end{Remark}
\begin{Remark}\label{R1}
	Let $\mathcal{A}$ be a dual Banach algebra. Then for every $M\in{(\mathcal{A}\hat{\otimes}\mathcal{A})^{**}}$ and $f\in{\mathcal{A}_{\ast}}$ we have
	\begin{equation*}
	\begin{split}
	\langle f,\pi_{\sigma{wc}}q(M)\rangle&=\langle \pi^*\vert_{\mathcal{A}_{\ast}}(f),q(M)\rangle=\langle \pi^*\vert_{\mathcal{A}_{\ast}}(f),M\rangle=\langle\pi^*(f),M\rangle\\&=\langle f,\pi^{**}(M)\rangle=\langle i_{\mathcal{A}_{\ast}}(f),\pi^{**}(M)\rangle=\langle f,i^{\ast}_{\mathcal{A}_{\ast}}\pi^{**}(M)\rangle,
	\end{split}
	\end{equation*}
where $q:(\mathcal{A}\hat{\otimes}\mathcal{A})^{\ast\ast}\longrightarrow(\sigma{wc}({\mathcal{A}}\hat{\otimes}{\mathcal{A}})^{\ast})^{\ast}$ is as above. So $i^{\ast}_{\mathcal{A}_{\ast}}\pi^{**}_{\mathcal{A}}=\pi_{\sigma{wc}}q$.
\end{Remark}
\begin{lemma}\label{l3}
	Let $\mathcal{A}$ be a dual Banach algebra. If $\mathcal{A}$ is Connes amenable, then $\mathcal{A}$ is Johnson pseudo-Connes amenable.
\end{lemma}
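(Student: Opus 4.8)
The plan is to manufacture the required net directly from a $\sigma wc$-virtual diagonal, using the surjectivity of the quotient map from Remark \ref{qou}. Since $\mathcal{A}$ is Connes amenable, fix $M\in(\sigma wc(\mathcal{A}\hat{\otimes}\mathcal{A})^{*})^{*}$ with $a\cdot M=M\cdot a$ and $a\,\pi_{\sigma wc}(M)=a$ for every $a\in\mathcal{A}$. The canonical embedding $\sigma wc(\mathcal{A}\hat{\otimes}\mathcal{A})^{*}\hookrightarrow(\mathcal{A}\hat{\otimes}\mathcal{A})^{*}$ is isometric, so its adjoint---which is exactly the map $q:(\mathcal{A}\hat{\otimes}\mathcal{A})^{**}\to(\sigma wc(\mathcal{A}\hat{\otimes}\mathcal{A})^{*})^{*}$ of Remark \ref{qou}---is a surjection. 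Hence I can choose $\widetilde{M}\in(\mathcal{A}\hat{\otimes}\mathcal{A})^{**}$ with $q(\widetilde{M})=M$, and I propose to take the constant net $m_{\alpha}:=\widetilde{M}$, which is permitted since the net in the definition need not be bounded.

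To verify the first defining condition, I would note that $q$ is an $\mathcal{A}$-bimodule morphism, which follows at once from the description $q(N)=N|_{\sigma wc(\mathcal{A}\hat{\otimes}\mathcal{A})^{*}}$ together with the formulas for the module actions on $(\mathcal{A}\hat{\otimes}\mathcal{A})^{*}$ and $(\mathcal{A}\hat{\otimes}\mathcal{A})^{**}$. Therefore $q(a\cdot\widetilde{M}-\widetilde{M}\cdot a)=a\cdot M-M\cdot a=0$, that is, $a\cdot\widetilde{M}-\widetilde{M}\cdot a$ annihilates $\sigma wc(\mathcal{A}\hat{\otimes}\mathcal{A})^{*}$; this is precisely $\langle T,a\cdot\widetilde{M}\rangle=\langle T,\widetilde{M}\cdot a\rangle$ for all $a\in\mathcal{A}$ and $T\in\sigma wc(\mathcal{A}\hat{\otimes}\mathcal{A})^{*}$.

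For the second condition, Remark \ref{R1} gives $i^{*}_{\mathcal{A}_{*}}\pi^{**}_{\mathcal{A}}(\widetilde{M})=\pi_{\sigma wc}q(\widetilde{M})=\pi_{\sigma wc}(M)$. Since $\pi_{\sigma wc}$ is an $\mathcal{A}$-bimodule morphism, applying it to $a\cdot M=M\cdot a$ yields $a\cdot\pi_{\sigma wc}(M)=\pi_{\sigma wc}(M)\cdot a$, and combining this with $a\,\pi_{\sigma wc}(M)=a$ gives $\pi_{\sigma wc}(M)\,a=a$. Hence $i^{*}_{\mathcal{A}_{*}}\pi^{**}_{\mathcal{A}}(m_{\alpha})\,a=a$ for every $\alpha$ and every $a\in\mathcal{A}$, so the constant net $(m_{\alpha})$ trivially satisfies $i^{*}_{\mathcal{A}_{*}}\pi^{**}_{\mathcal{A}}(m_{\alpha})\,a\to a$, and $\mathcal{A}$ is Johnson pseudo-Connes amenable. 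I do not anticipate a genuine obstacle; the only points needing care are the bookkeeping of left/right actions when transporting the identities between $(\mathcal{A}\hat{\otimes}\mathcal{A})^{**}$ and $(\sigma wc(\mathcal{A}\hat{\otimes}\mathcal{A})^{*})^{*}$, and the standard fact that $q$ is surjective because it is the adjoint of an isometric embedding.
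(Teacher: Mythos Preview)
Your proposal is correct and follows essentially the same approach as the paper: lift the $\sigma wc$-virtual diagonal through the quotient map $q$ to obtain a single element of $(\mathcal{A}\hat{\otimes}\mathcal{A})^{**}$ serving as a constant net, and then read off both defining conditions via Remark~\ref{R1}. Your write-up is in fact a bit more explicit than the paper's, since you justify the surjectivity and bimodule property of $q$ and handle the left/right swap for $\pi_{\sigma wc}(M)$ rather than taking these for granted.
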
	
\begin{proof}
Let $\mathcal{A}$ be a Connes amenable Banach algebra. Then by \cite[Theorem 4.8]{Runde:2004}, there is an element $\tilde{M}\in{(\sigma{wc}(\mathcal{A}\hat{\otimes}\mathcal{A})^{\ast})^*}$ such that
\begin{center}
	$a\cdot{\tilde{M}}=\tilde{M}\cdot{a}\quad$ and $\quad\pi_{\sigma{wc}}(\tilde{M})a=a\quad(a\in{\mathcal{A}})$. 
\end{center}
Consider $M\in{(\mathcal{A}\hat{\otimes}\mathcal{A})^{**}}$ such that $q(M)=\tilde{M}$, where $q:(\mathcal{A}\hat{\otimes}\mathcal{A})^{\ast\ast}\longrightarrow(\sigma{wc}({\mathcal{A}}\hat{\otimes}{\mathcal{A}})^{\ast})^{\ast}$ is the quotient map as in the Remark \ref{qou}.
By Remark \ref{R1}, for every $a\in{\mathcal{A}}$ and $T\in\sigma{wc}(\mathcal{A}\hat{\otimes}\mathcal{A})^{\ast}$ we have
\begin{center}
	$\langle{T},a\cdot{M}\rangle=\langle{T},{M}\cdot{a}\rangle\quad$ and $\quad i^{\ast}_{\mathcal{A}_{\ast}}\pi_{\mathcal{A}}^{**}(M){a}={a}$. 
\end{center}
\end{proof}
\begin{lemma}
	Let $\mathcal{A}$ be a dual Banach algebra. If $\mathcal{A}$ is Johnson pseudo-Connes amenable, then $\mathcal{A}$ is pseudo-Connes amenable.
\end{lemma}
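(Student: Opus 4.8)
The plan is to pull a Johnson pseudo-Connes diagonal net down from the bidual into $\mathcal{A}\hat{\otimes}\mathcal{A}$ by Goldstine's theorem, and then splice the resulting two-parameter family into a single net via the iterated-limit construction.

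So let $(m_\alpha)_{\alpha\in\Lambda}$ be a net in $(\mathcal{A}\hat{\otimes}\mathcal{A})^{\ast\ast}$ with $\langle T,a\cdot m_\alpha\rangle=\langle T,m_\alpha\cdot a\rangle$ for all $T\in\sigma{wc}(\mathcal{A}\hat{\otimes}\mathcal{A})^{\ast}$ and $i^{\ast}_{\mathcal{A}_{\ast}}\pi_{\mathcal{A}}^{\ast\ast}(m_\alpha)a\to a$ for all $a\in\mathcal{A}$. For each fixed $\alpha$, Goldstine's theorem yields a net $(x_{\alpha,\beta})_{\beta\in B_\alpha}$ in $\mathcal{A}\hat{\otimes}\mathcal{A}$, bounded by $\|m_\alpha\|$, with $x_{\alpha,\beta}\to m_\alpha$ in the $wk^{\ast}$-topology of $(\mathcal{A}\hat{\otimes}\mathcal{A})^{\ast\ast}$. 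Now I would invoke the $wk^{\ast}$-$wk^{\ast}$ continuity of: the two module actions of $\mathcal{A}$ on $(\mathcal{A}\hat{\otimes}\mathcal{A})^{\ast\ast}$; the quotient map $q$ of Remark \ref{qou}; $\pi_{\mathcal{A}}^{\ast\ast}$; the adjoint $i^{\ast}_{\mathcal{A}_{\ast}}\colon\mathcal{A}^{\ast\ast}\to\mathcal{A}$; and right multiplication by a fixed $a$ on $\mathcal{A}$, the last being available precisely because $\mathcal{A}$ is a dual Banach algebra. Combining these with the identity $i^{\ast}_{\mathcal{A}_{\ast}}\pi_{\mathcal{A}}^{\ast\ast}=\pi_{\sigma{wc}}q$ of Remark \ref{R1} (which, restricted to the canonical image of $\mathcal{A}\hat{\otimes}\mathcal{A}$ in the bidual, shows that $\pi_{\sigma{wc}}$ there coincides with $\pi_{\mathcal{A}}$), this gives, for every $a\in\mathcal{A}$ and every fixed $\alpha$,
\[
a\cdot x_{\alpha,\beta}-x_{\alpha,\beta}\cdot a\overset{wk^{\ast}}{\longrightarrow}0\ \text{ in }(\sigma{wc}(\mathcal{A}\hat{\otimes}\mathcal{A})^{\ast})^{\ast},\qquad \pi_{\sigma{wc}}(x_{\alpha,\beta})a\overset{wk^{\ast}}{\longrightarrow}i^{\ast}_{\mathcal{A}_{\ast}}\pi_{\mathcal{A}}^{\ast\ast}(m_\alpha)a\ \text{ in }\mathcal{A}
\]
as $\beta$ increases — the first convergence using the hypothesis $\langle T,a\cdot m_\alpha\rangle=\langle T,m_\alpha\cdot a\rangle$ to see that the limit $q(a\cdot m_\alpha-m_\alpha\cdot a)$ vanishes — while $i^{\ast}_{\mathcal{A}_{\ast}}\pi_{\mathcal{A}}^{\ast\ast}(m_\alpha)a\to a$ as $\alpha$ increases.

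It remains to manufacture a single net out of $(x_{\alpha,\beta})$. Let $\Gamma=\Lambda\times\prod_{\alpha\in\Lambda}B_\alpha$ carry the product order and set $X_{(\alpha,\phi)}=x_{\alpha,\phi(\alpha)}$; by the classical theorem on iterated limits the two displayed ``$\beta$-then-$\alpha$'' convergences upgrade to $a\cdot X_\gamma-X_\gamma\cdot a\overset{wk^{\ast}}{\to}0$ in $(\sigma{wc}(\mathcal{A}\hat{\otimes}\mathcal{A})^{\ast})^{\ast}$ and $\pi_{\sigma{wc}}(X_\gamma)a\overset{wk^{\ast}}{\to}a$ in $\mathcal{A}$, for every $a\in\mathcal{A}$. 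Since $(X_\gamma)$ is a net in $\mathcal{A}\hat{\otimes}\mathcal{A}$, this is exactly the defining property of pseudo-Connes amenability. I expect the main obstacle to be bookkeeping rather than conceptual: tracking the several $wk^{\ast}$-topologies and the embeddings $\mathcal{A}\hat{\otimes}\mathcal{A}\hookrightarrow(\mathcal{A}\hat{\otimes}\mathcal{A})^{\ast\ast}\to(\sigma{wc}(\mathcal{A}\hat{\otimes}\mathcal{A})^{\ast})^{\ast}$, and arranging the iterated-limit directed set carefully enough that the resulting single net genuinely works simultaneously for all $a\in\mathcal{A}$.
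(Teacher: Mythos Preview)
Your proposal is correct and follows essentially the same route as the paper: Goldstine's theorem to descend each $m_\alpha$ to a net in $\mathcal{A}\hat{\otimes}\mathcal{A}$, $wk^{\ast}$-continuity of the module actions, of $q$, and of $i^{\ast}_{\mathcal{A}_{\ast}}\pi_{\mathcal{A}}^{\ast\ast}=\pi_{\sigma{wc}}q$ together with separate $wk^{\ast}$-continuity of multiplication in $\mathcal{A}$, and then the iterated-limit (diagonal net) construction to produce a single net witnessing pseudo-Connes amenability. Your version is arguably a touch more careful in allowing distinct Goldstine index sets $B_\alpha$ and forming $\Gamma=\Lambda\times\prod_\alpha B_\alpha$, whereas the paper writes a common $\Theta$, but the argument is the same.
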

\begin{proof}
Since $\mathcal{A}$ is Johnson pseudo-Connes amenable, there exists a net $(m_{\alpha})$ in $(\mathcal{A}\hat{\otimes}\mathcal{A})^{**}$ such that $\langle{T},a\cdot{m_{\alpha}}\rangle=\langle{T},{m_{\alpha}}\cdot{a}\rangle$ and $i^{\ast}_{\mathcal{A}_{\ast}}\pi_{\mathcal{A}}^{**}(m_{\alpha}){a}\rightarrow{a}$ for every $a\in{\mathcal{A}}$ and $T\in\sigma{wc}(\mathcal{A}\hat{\otimes}\mathcal{A})^{\ast}$. By Goldstein's theorem, there is a net $(u_\beta^\alpha)_{\beta\in{\Theta}}$ in $\mathcal{A}\hat{\otimes}\mathcal{A}$ such that $wk^*\hbox{-}\lim\limits_{\beta}{u}_\beta^\alpha=m_{\alpha}$. Thus for every $a\in{\mathcal{A}}$
\begin{equation*}
wk^*\hbox{-}\lim\limits_{\beta}a\cdot u_\beta^\alpha-u_\beta^\alpha\cdot a=a\cdot m_{\alpha}-m_{\alpha}\cdot a.
\end{equation*} 
Since $i^{\ast}_{\mathcal{A}_{\ast}}\pi_{\mathcal{A}}^{**}$ is $wk^*$-continuous and the multiplication in $\mathcal{A}$ is separately $wk^*$-continuous \cite[Exercise 4.4.1]{Runde:2002}, for every $a\in{\mathcal{A}}$ we have
\begin{equation*}
wk^*\hbox{-}\lim\limits_{\alpha}wk^*\hbox{-}\lim\limits_{\beta}i^{\ast}_{\mathcal{A}_{\ast}}\pi_{\mathcal{A}}^{**}(u_\beta^\alpha)a=wk^*\hbox{-}\lim\limits_{\alpha}i^{\ast}_{\mathcal{A}_{\ast}}\pi_{\mathcal{A}}^{**}(m_{\alpha})a=a.
\end{equation*}
Let $E=I\times\Theta^I$ be a directed set with product ordering defined by
\begin{equation*}
(\alpha,\beta)\leq_E(\alpha^\prime,\beta^\prime)\Leftrightarrow\alpha\leq_I\alpha^\prime\,\, {\hbox{and}}\,\,\,\beta\leq_{\Theta^I}\beta^\prime\qquad(\alpha,\alpha^\prime\in{I},\quad \beta,\beta^\prime\in{\Theta^I}),
\end{equation*}
where $\Theta^I$ is the set of all functions from $I$ into $\Theta$ and $\beta\leq_{\Theta^I}\beta^\prime$ means that $\beta(d)\leq_\Theta\beta^\prime(d)$ for every $d\in{I}$. Suppose that $\gamma=(\alpha,\beta_\alpha)$ and $n_\gamma=u_\beta^\alpha$. By iterated limit theorem \cite[Page 69]{kelley:75}, one can see that $wk^*\hbox{-}\lim\limits_{\gamma}a\cdot n_\gamma-n_\gamma\cdot a=0$ in $(\sigma{wc}(\mathcal{A}\hat{\otimes}\mathcal{A})^{\ast})^*$ and by Remark \ref{R1}, $wk^*\hbox{-}\lim\limits_{\gamma}\pi_{\sigma{wc}}q(n_\gamma)a=a$ in $\mathcal{A}$, where $q:(\mathcal{A}\hat{\otimes}\mathcal{A})^{\ast\ast}\longrightarrow(\sigma{wc}({\mathcal{A}}\hat{\otimes}{\mathcal{A}})^{\ast})^{\ast}$ is the quotient map as in the Remark \ref{qou}. So $\mathcal{A}$ is pseudo-Connes amenable \cite[Definition 4.3]{Mahmoodi:14}.
\end{proof}
\begin{lemma}\label{l5}
	Let $\mathcal{A}$ be a unital dual Banach algebra.  If $\mathcal{A}$ is Johnson pseudo-Connes amenable, then $\mathcal{A}$ is approximately Connes amenable.
\end{lemma}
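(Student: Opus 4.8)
The plan is to transport the net $(m_\alpha)$ witnessing Johnson pseudo-Connes amenability down to $(\sigma wc(\mathcal{A}\hat{\otimes}\mathcal{A})^{\ast})^{\ast}$ through the quotient map $q$ of Remark \ref{qou}, and then to recognise the image as an approximate $\sigma wc$-virtual diagonal, so that the characterisation of approximate Connes amenability for unital dual Banach algebras in \cite[Theorem 3.3]{Eslam:2012} applies.

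Concretely, I would start from the net $(m_\alpha)$ in $(\mathcal{A}\hat{\otimes}\mathcal{A})^{\ast\ast}$ given by the hypothesis, so that $\langle T, a\cdot m_\alpha\rangle=\langle T, m_\alpha\cdot a\rangle$ for all $a\in\mathcal{A}$ and $T\in\sigma wc(\mathcal{A}\hat{\otimes}\mathcal{A})^{\ast}$, and $i^{\ast}_{\mathcal{A}_{\ast}}\pi^{\ast\ast}_{\mathcal{A}}(m_\alpha)a\to a$ for all $a\in\mathcal{A}$. Set $M_\alpha:=q(m_\alpha)\in(\sigma wc(\mathcal{A}\hat{\otimes}\mathcal{A})^{\ast})^{\ast}$. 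Since $\sigma wc(\mathcal{A}\hat{\otimes}\mathcal{A})^{\ast}$ is a closed $\mathcal{A}$-submodule of $(\mathcal{A}\hat{\otimes}\mathcal{A})^{\ast}$, the restriction map $q$ is an $\mathcal{A}$-bimodule morphism, hence $q(a\cdot m_\alpha)=a\cdot M_\alpha$ and $q(m_\alpha\cdot a)=M_\alpha\cdot a$. The first identity in the definition of Johnson pseudo-Connes amenability says exactly that $a\cdot m_\alpha$ and $m_\alpha\cdot a$ restrict to the same functional on $\sigma wc(\mathcal{A}\hat{\otimes}\mathcal{A})^{\ast}$, that is, $q(a\cdot m_\alpha)=q(m_\alpha\cdot a)$; therefore $a\cdot M_\alpha=M_\alpha\cdot a$ for every $a\in\mathcal{A}$, so in particular $a\cdot M_\alpha-M_\alpha\cdot a\to 0$. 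Next, Remark \ref{R1} gives $\pi_{\sigma wc}\circ q=i^{\ast}_{\mathcal{A}_{\ast}}\circ\pi^{\ast\ast}_{\mathcal{A}}$, whence $\pi_{\sigma wc}(M_\alpha)a=i^{\ast}_{\mathcal{A}_{\ast}}\pi^{\ast\ast}_{\mathcal{A}}(m_\alpha)a\to a$ for every $a\in\mathcal{A}$. Thus $(M_\alpha)$ is a net in $(\sigma wc(\mathcal{A}\hat{\otimes}\mathcal{A})^{\ast})^{\ast}$ with $a\cdot M_\alpha-M_\alpha\cdot a\to 0$ and $\pi_{\sigma wc}(M_\alpha)a\to a$, and since $\mathcal{A}$ is unital, \cite[Theorem 3.3]{Eslam:2012} lets me conclude that $\mathcal{A}$ is approximately Connes amenable.

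I do not expect a genuine obstacle here; the argument is essentially bookkeeping. The two small points that merit care are that passing to the $\sigma wc$-level actually upgrades the merely weak commutation in the definition to the exact identity $a\cdot M_\alpha=M_\alpha\cdot a$ (so the first approximate condition holds trivially), and that the unitality hypothesis is used precisely to license the virtual-diagonal description of approximate Connes amenability provided by \cite[Theorem 3.3]{Eslam:2012}, rather than its definition via approximately inner derivations.
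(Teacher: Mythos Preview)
Your proposal is correct and follows essentially the same route as the paper: set $\tilde{m}_\alpha=q(m_\alpha)$, observe that the Johnson pseudo-Connes amenability condition forces $a\cdot\tilde{m}_\alpha=\tilde{m}_\alpha\cdot a$, invoke Remark~\ref{R1} to obtain $\pi_{\sigma wc}(\tilde{m}_\alpha)a\to a$, and then apply \cite[Theorem 3.3]{Eslam:2012}. Your additional remarks on why $q$ is a bimodule map and on the exact role of unitality are accurate elaborations that the paper leaves implicit.
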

\begin{proof}
Since $\mathcal{A}$ is Johnson pseudo-Connes amenable, there exists a net $(m_{\alpha})$ in $(\mathcal{A}\hat{\otimes}\mathcal{A})^{**}$ such that $\langle{T},a\cdot{m_{\alpha}}\rangle=\langle{T},{m_{\alpha}}\cdot{a}\rangle$ and $i^{\ast}_{\mathcal{A}_{\ast}}\pi_{\mathcal{A}}^{**}(m_{\alpha}){a}\rightarrow{a}$ for every $a\in{\mathcal{A}}$ and $T\in\sigma{wc}(\mathcal{A}\hat{\otimes}\mathcal{A})^{\ast}$. Let $\tilde{m}_\alpha=q(m_{\alpha})$, where $q:(\mathcal{A}\hat{\otimes}\mathcal{A})^{\ast\ast}\rightarrow(\sigma{wc}({\mathcal{A}}\hat{\otimes}{\mathcal{A}})^{\ast})^{\ast}$ is the quotient map as in the Remark \ref{qou}. So $a\cdot\tilde{m}_\alpha=\tilde{m}_\alpha\cdot a$ for every $a\in{\mathcal{A}}$ and by Remark \ref{R1} we have $\pi_{\sigma{wc}}(\tilde{m}_\alpha)a\rightarrow a$. So $\mathcal{A}$ is approximately Connes amenable \cite[Theorem 3.3]{Eslam:2012}.
	\end{proof}
 A dual Banach algebra $\mathcal{A}$ is called Connes biprojective if there exists a bounded $\mathcal{A}$-bimodule morphism $\rho:\mathcal{A}\longrightarrow(\sigma{wc}(\mathcal{A}\hat{\otimes}\mathcal{A})^{\ast})^{\ast}$ such that $\pi_{\sigma{wc}}\circ\rho=id_{\mathcal{A}}$. Shirinkalam and second author \cite{Shi:2016} showed that a dual Banach algebra $\mathcal{A}$ is Connes amenable if and only if $\mathcal{A}$ is Connes biprojective and $\mathcal{A}$  has an identity.
\begin{Proposition}
	Let $\mathcal{A}$ be a dual Banach algebra with a central approximate identity. If $\mathcal{A}$ is Connes biprojective, then $\mathcal{A}$ is Johnson pseudo-Connes amenable.
\end{Proposition}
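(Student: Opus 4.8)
The plan is to manufacture the required net directly from the Connes biprojectivity morphism, fed a central approximate identity, and then pull everything back to the bidual along the quotient map $q$ of Remark \ref{qou}. So let $\rho\colon\mathcal{A}\longrightarrow(\sigma{wc}(\mathcal{A}\hat{\otimes}\mathcal{A})^{\ast})^{\ast}$ be a bounded $\mathcal{A}$-bimodule morphism with $\pi_{\sigma{wc}}\circ\rho=id_{\mathcal{A}}$, and let $(e_{\alpha})$ be a central approximate identity for $\mathcal{A}$. Since $\sigma{wc}(\mathcal{A}\hat{\otimes}\mathcal{A})^{\ast}$ is a closed subspace of $(\mathcal{A}\hat{\otimes}\mathcal{A})^{\ast}$, the restriction map $q\colon(\mathcal{A}\hat{\otimes}\mathcal{A})^{\ast\ast}\longrightarrow(\sigma{wc}(\mathcal{A}\hat{\otimes}\mathcal{A})^{\ast})^{\ast}$ is surjective by the Hahn--Banach theorem, so for each $\alpha$ we may fix $m_{\alpha}\in(\mathcal{A}\hat{\otimes}\mathcal{A})^{\ast\ast}$ with $q(m_{\alpha})=\rho(e_{\alpha})$. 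I claim $(m_{\alpha})$ is the desired net.

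For the bimodule condition, fix $a\in\mathcal{A}$ and $T\in\sigma{wc}(\mathcal{A}\hat{\otimes}\mathcal{A})^{\ast}$. Because $\sigma{wc}(\mathcal{A}\hat{\otimes}\mathcal{A})^{\ast}$ is an $\mathcal{A}$-submodule, both $T\cdot a$ and $a\cdot T$ lie in it, so testing $m_{\alpha}$ against them only sees $q(m_{\alpha})=\rho(e_{\alpha})$. Unwinding the dual module actions gives $\langle T,a\cdot m_{\alpha}\rangle=\langle T\cdot a,m_{\alpha}\rangle=\langle T\cdot a,\rho(e_{\alpha})\rangle=\langle T,a\cdot\rho(e_{\alpha})\rangle$, and likewise $\langle T,m_{\alpha}\cdot a\rangle=\langle a\cdot T,m_{\alpha}\rangle=\langle T,\rho(e_{\alpha})\cdot a\rangle$. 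Since $\rho$ is an $\mathcal{A}$-bimodule morphism and $e_{\alpha}$ is central, $a\cdot\rho(e_{\alpha})=\rho(ae_{\alpha})=\rho(e_{\alpha}a)=\rho(e_{\alpha})\cdot a$, so $\langle T,a\cdot m_{\alpha}\rangle=\langle T,m_{\alpha}\cdot a\rangle$, as needed.

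For the approximation condition, invoke Remark \ref{R1}: $i^{\ast}_{\mathcal{A}_{\ast}}\pi_{\mathcal{A}}^{\ast\ast}=\pi_{\sigma{wc}}\circ q$. Hence $i^{\ast}_{\mathcal{A}_{\ast}}\pi_{\mathcal{A}}^{\ast\ast}(m_{\alpha})=\pi_{\sigma{wc}}(q(m_{\alpha}))=\pi_{\sigma{wc}}(\rho(e_{\alpha}))=e_{\alpha}$, so $i^{\ast}_{\mathcal{A}_{\ast}}\pi_{\mathcal{A}}^{\ast\ast}(m_{\alpha})a=e_{\alpha}a\to a$ for every $a\in\mathcal{A}$, because $(e_{\alpha})$ is an approximate identity. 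Thus $(m_{\alpha})$ witnesses Johnson pseudo-Connes amenability of $\mathcal{A}$.

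I do not expect a serious obstacle; the argument is essentially a transfer of the Connes biprojectivity data through $q$. The one point that needs care is the bookkeeping of the $\mathcal{A}$-bimodule actions across $q$ — precisely, that the identity $a\cdot\rho(e_{\alpha})=\rho(e_{\alpha})\cdot a$, which lives in $(\sigma{wc}(\mathcal{A}\hat{\otimes}\mathcal{A})^{\ast})^{\ast}$, correctly produces the required relation for $m_{\alpha}$ in $(\mathcal{A}\hat{\otimes}\mathcal{A})^{\ast\ast}$. This works exactly because $\sigma{wc}(\mathcal{A}\hat{\otimes}\mathcal{A})^{\ast}$ is a submodule, so that $q(a\cdot M)=a\cdot q(M)$ and $q(M\cdot a)=q(M)\cdot a$, which is the only place the submodule property is used beyond surjectivity of $q$.
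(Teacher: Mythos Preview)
Your proof is correct and follows essentially the same approach as the paper: lift $\rho(e_{\alpha})$ through the surjective quotient $q$ to obtain $m_{\alpha}\in(\mathcal{A}\hat{\otimes}\mathcal{A})^{\ast\ast}$, use the submodule property of $\sigma wc(\mathcal{A}\hat{\otimes}\mathcal{A})^{\ast}$ together with centrality of $e_{\alpha}$ for the bimodule condition, and invoke Remark \ref{R1} to get $i^{\ast}_{\mathcal{A}_{\ast}}\pi_{\mathcal{A}}^{\ast\ast}(m_{\alpha})a=e_{\alpha}a\to a$. Your explicit mention of Hahn--Banach for the surjectivity of $q$ is a small addition the paper leaves implicit, but otherwise the arguments coincide.
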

\begin{proof}
Let $(e_\alpha)$ be a central approximate identity for $\mathcal{A}$ and let $\rho:\mathcal{A}\longrightarrow(\sigma{wc}({\mathcal{A}}\hat{\otimes}{\mathcal{A}})^{\ast})^{\ast}$ be a bounded $\mathcal{A}$-bimodule morphism such that $\pi_{\sigma{wc}}\circ\rho=id_{\mathcal{A}}$. Consider the net $\rho(e_\alpha)$ in $(\sigma{wc}({\mathcal{A}}\hat{\otimes}{\mathcal{A}})^{\ast})^{\ast}$. So there is a net $(m_\alpha)$ in $({\mathcal{A}}\hat{\otimes}{\mathcal{A}})^{**}$ such that $q(m_\alpha)=\rho(e_\alpha)$, where $q:(\mathcal{A}\hat{\otimes}\mathcal{A})^{\ast\ast}\rightarrow(\sigma{wc}({\mathcal{A}}\hat{\otimes}{\mathcal{A}})^{\ast})^{\ast}$ is the quotient map as in the Remark \ref{qou}. For every $a\in{\mathcal{A}}$ and $T\in\sigma{wc}(\mathcal{A}\hat{\otimes}\mathcal{A})^{\ast}$ we have
\begin{equation*}
\begin{split}
\langle{T},a\cdot{m_{\alpha}}\rangle&=\langle T\cdot a,{m_{\alpha}}\rangle=\langle T\cdot a,{m_{\alpha}}\vert_{\sigma{wc}({\mathcal{A}}\hat{\otimes}{\mathcal{A}})^{\ast}}\rangle=\langle T\cdot a,q(m_{\alpha}) \rangle=\langle T\cdot a,\rho(e_\alpha)\rangle=\langle T,a\cdot\rho(e_\alpha)\rangle\\&=\langle T,\rho(ae_\alpha)\rangle=\langle T,\rho(e_\alpha a)\rangle=\langle T,\rho(e_\alpha)\cdot a\rangle=\langle a\cdot T,\rho(e_\alpha)\rangle=\langle a\cdot T,q(m_{\alpha}) \rangle=\langle T,q(m_{\alpha})\cdot a\rangle\\&=\langle T,q(m_{\alpha}\cdot a)\rangle=\langle T,(m_{\alpha}\cdot a)\vert_{\sigma{wc}({\mathcal{A}}\hat{\otimes}{\mathcal{A}})^{\ast}}\rangle=\langle{T},{m_{\alpha}}\cdot{a}\rangle,
\end{split}
\end{equation*}
and by Remark \ref{R1}
\begin{equation*}
i^{\ast}_{\mathcal{A}_{\ast}}\pi_{\mathcal{A}}^{**}(m_{\alpha}){a}=\pi_{\sigma{wc}} q(m_{\alpha}){a}=\pi_{\sigma{wc}}\circ\rho(e_\alpha)a=e_\alpha a\rightarrow a.
\end{equation*}
So $\mathcal{A}$ is Johnson pseudo-Connes amenable.
	\end{proof}

The notion of $\varphi$-Connes amenability for a dual Banach algebra $\mathcal{A}$ introduced by Mahmoodi  and some characterizations were given \cite{Mahmoodi:2014} and \cite{ram:18}, where $\varphi$ is a ${wk}^{\ast}$-continuous character on $\mathcal{A}$. We say that $\mathcal{A}$ is $\varphi$-Connes amenable if there exists a bounded linear functional $m$ on  $\sigma{wc}({\mathcal{A}}^{\ast})$ satisfying $m(\varphi)=1$ and $m(f\cdot{a})=\varphi(a)m(f)$ for every $a\in{\mathcal{A}}$ and $f\in{\sigma{wc}({\mathcal{A}}^{\ast})}$. The set of all ${wk}^{\ast}$-continuous characters on $\mathcal{A}$ is denoted by $\Delta_{wk^*}(\mathcal{A})$.
\begin{Proposition}\label{p6}
	Let $\mathcal{A}$ be a dual Banach algebra and $\varphi\in{\Delta_{wk^*}(\mathcal{A})}$. If $\mathcal{A}$ is Johnson pseudo-Connes amenable, then $\mathcal{A}$ is $\varphi$-Connes amenable.
\end{Proposition}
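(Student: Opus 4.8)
The plan is to build the required functional on $\sigma wc(\mathcal{A}^{\ast})$ directly from a single member of the net $(m_{\alpha})$ witnessing Johnson pseudo-Connes amenability, after one scalar normalization; no limiting process over $\alpha$ will be needed. The key elementary fact I would record first is that tensoring on the right by $\varphi$ maps $\sigma wc(\mathcal{A}^{\ast})$ into $\sigma wc(\mathcal{A}\hat{\otimes}\mathcal{A})^{\ast}$: for $f\in\sigma wc(\mathcal{A}^{\ast})$, a direct computation on elementary tensors gives $a\cdot(f\otimes\varphi)=\varphi(a)(f\otimes\varphi)$ and $(f\otimes\varphi)\cdot a=(f\cdot a)\otimes\varphi$, so $a\mapsto a\cdot(f\otimes\varphi)$ is $wk^{\ast}$-$wk$-continuous (its range is one-dimensional and $\varphi\in\mathcal{A}_{\ast}$ is $wk^{\ast}$-continuous), and $a\mapsto(f\otimes\varphi)\cdot a$ is $wk^{\ast}$-$wk$-continuous because it is the $wk^{\ast}$-$wk$-continuous map $a\mapsto f\cdot a$ (here $f\in\sigma wc(\mathcal{A}^{\ast})$) followed by the bounded linear map $g\mapsto g\otimes\varphi$ from $\mathcal{A}^{\ast}$ into $(\mathcal{A}\hat{\otimes}\mathcal{A})^{\ast}$.

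Next I would pin down the normalization. Since $\varphi$ is $wk^{\ast}$-continuous, $\varphi\in\mathcal{A}_{\ast}$, and $\pi^{\ast}_{\mathcal{A}}(\varphi)=\varphi\otimes\varphi$ because $\pi^{\ast}_{\mathcal{A}}(\varphi)(a\otimes b)=\varphi(ab)=\varphi(a)\varphi(b)$. Applying Remark \ref{R1} with $f=\varphi$ gives $\langle\varphi\otimes\varphi,m_{\alpha}\rangle=\langle\varphi,i^{\ast}_{\mathcal{A}_{\ast}}\pi^{\ast\ast}_{\mathcal{A}}(m_{\alpha})\rangle$. Fixing $a_{0}\in\mathcal{A}$ with $\varphi(a_{0})=1$ (possible since $\varphi\neq0$) and using the second defining property $i^{\ast}_{\mathcal{A}_{\ast}}\pi^{\ast\ast}_{\mathcal{A}}(m_{\alpha})a_{0}\to a_{0}$ together with the norm-continuity and multiplicativity of $\varphi$, I obtain $\langle\varphi\otimes\varphi,m_{\alpha}\rangle=\varphi\big(i^{\ast}_{\mathcal{A}_{\ast}}\pi^{\ast\ast}_{\mathcal{A}}(m_{\alpha})\big)\to\varphi(a_{0})=1$. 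Hence there is an index $\alpha_{0}$ with $c:=\langle\varphi\otimes\varphi,m_{\alpha_{0}}\rangle\neq0$, and I define $m\colon\sigma wc(\mathcal{A}^{\ast})\to\mathbb{C}$ by $m(f)=c^{-1}\langle f\otimes\varphi,m_{\alpha_{0}}\rangle$; this is a bounded linear functional because $f\mapsto f\otimes\varphi$ is bounded.

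It then remains to check the two required identities. First, $m(\varphi)=c^{-1}\langle\varphi\otimes\varphi,m_{\alpha_{0}}\rangle=1$. Second, for $a\in\mathcal{A}$ and $f\in\sigma wc(\mathcal{A}^{\ast})$, using $(f\cdot a)\otimes\varphi=(f\otimes\varphi)\cdot a$ and $\varphi(a)(f\otimes\varphi)=a\cdot(f\otimes\varphi)$ and transferring the module actions across the duality between $(\mathcal{A}\hat{\otimes}\mathcal{A})^{\ast}$ and $(\mathcal{A}\hat{\otimes}\mathcal{A})^{\ast\ast}$,
$$c\big(m(f\cdot a)-\varphi(a)m(f)\big)=\langle(f\otimes\varphi)\cdot a,m_{\alpha_{0}}\rangle-\langle a\cdot(f\otimes\varphi),m_{\alpha_{0}}\rangle=\langle f\otimes\varphi,\,a\cdot m_{\alpha_{0}}-m_{\alpha_{0}}\cdot a\rangle,$$
and since $f\otimes\varphi\in\sigma wc(\mathcal{A}\hat{\otimes}\mathcal{A})^{\ast}$ by the first paragraph, the first defining property of Johnson pseudo-Connes amenability makes the right-hand side zero; thus $m(f\cdot a)=\varphi(a)m(f)$, so $\mathcal{A}$ is $\varphi$-Connes amenable. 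What makes everything collapse to a single index $\alpha_{0}$ is that the derivation-type condition of Johnson pseudo-Connes amenability is an exact equality valid for every $T\in\sigma wc(\mathcal{A}\hat{\otimes}\mathcal{A})^{\ast}$; the net enters only through its second property and only to guarantee that $c_{\alpha}=\langle\varphi\otimes\varphi,m_{\alpha}\rangle$ is eventually nonzero. Accordingly the only points needing care are the verification that $f\otimes\varphi\in\sigma wc(\mathcal{A}\hat{\otimes}\mathcal{A})^{\ast}$ (i.e. the $wk^{\ast}$-$wk$-continuity of the two attached module maps) and the bookkeeping with the adjoint module actions; I do not anticipate a genuine obstacle.
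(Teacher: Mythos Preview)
Your proof is correct and is essentially the same argument as the paper's, only packaged more directly: the paper introduces the map $\theta:\mathcal{A}\hat{\otimes}\mathcal{A}\to\mathcal{A}$, $\theta(a\otimes b)=\varphi(b)a$, and works with $q\circ\theta^{**}(m_\alpha)$, but since $\theta^{*}(f)=f\otimes\varphi$ your functional $m(f)=c^{-1}\langle f\otimes\varphi,m_{\alpha_0}\rangle$ is exactly the paper's $c^{-1}\,q\circ\theta^{**}(m_{\alpha_0})$. The core steps coincide: verifying that $f\otimes\varphi\in\sigma wc(\mathcal{A}\hat{\otimes}\mathcal{A})^{\ast}$ for $f\in\sigma wc(\mathcal{A}^{\ast})$ (your first paragraph is the content of the paper's equations (2.9)--(2.10)), showing $\langle\varphi\otimes\varphi,m_\alpha\rangle\to 1$ via $\pi_{\mathcal{A}}^{*}(\varphi)=\varphi\otimes\varphi$ and Remark~\ref{R1}, and then normalizing at a fixed index; your write-up simply bypasses the Goldstine approximation the paper uses to transfer identities from $\theta$ to $\theta^{**}$, working instead with the duality pairing directly.
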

\begin{proof}
Since $\mathcal{A}$ is Johnson pseudo-Connes amenable, there exists a net $(m_{\alpha})$ in $(\mathcal{A}\hat{\otimes}\mathcal{A})^{**}$ such that $\langle{T},a\cdot{m_{\alpha}}\rangle=\langle{T},{m_{\alpha}}\cdot{a}\rangle$ and $i^{\ast}_{\mathcal{A}_{\ast}}\pi_{\mathcal{A}}^{**}(m_{\alpha}){a}\rightarrow{a}$ for every $a\in{\mathcal{A}}$ and $T\in\sigma{wc}(\mathcal{A}\hat{\otimes}\mathcal{A})^{\ast}$.
Define $\theta:\mathcal{A}\hat{\otimes}\mathcal{A}\rightarrow\mathcal{A}$ by $\theta(a\otimes b)=\varphi(b)a$ for every $a,b\in{\mathcal{A}}$. So for each $a,b,c\in{\mathcal{A}}$,
\begin{equation*}
\theta(a\cdot (b\otimes c))=\theta(ab\otimes c)=\varphi(c)ab=a\theta(b\otimes c),
\end{equation*}
and 
\begin{equation*}
\theta( (b\otimes c)\cdot a)=\theta(b\otimes ca)=\varphi(ca)b=\varphi(c)\varphi(a)b=\varphi(a)\theta(b\otimes c),
\end{equation*}
 and also
 \begin{equation*}
 \langle\theta(b\otimes c),\varphi\rangle=\langle\varphi(c)b,\varphi\rangle=\varphi(b)\varphi(c)=\varphi(bc)=\langle\pi(b\otimes c),\varphi\rangle.
 \end{equation*}
 Thus for each $a\in{\mathcal{A}}$ and $u\in{\mathcal{A}\hat{\otimes}\mathcal{A}}$
\begin{equation}\label{e2.1}
\theta(a\cdot u)=a\theta(u),\quad \theta(u\cdot a)=\varphi(a)\theta(u),
\end{equation}
and
\begin{equation}\label{e2.02}
\langle\theta(u),\varphi\rangle=\langle\pi(u),\varphi\rangle.
\end{equation}
By Goldstein's Theorem for every $F\in{(\mathcal{A}\hat{\otimes}\mathcal{A})^{**}}$  there is a bounded net $(u_\alpha)$ in $\mathcal{A}\hat{\otimes}\mathcal{A}$ such that $wk^*\hbox{-}\lim\limits_{\alpha}u_\alpha=F$. Since $\theta^{**}:(\mathcal{A}\hat{\otimes}\mathcal{A})^{**}\rightarrow\mathcal{A}^{**}$ is a $wk^*$-continuous map, by (\ref{e2.1}) for every $a\in{\mathcal{A}}$ we have
\begin{equation}\label{e2.2}
\theta^{**}(a\cdot F)=wk^*\hbox{-}\lim\limits_{\alpha}\theta^{**}(a\cdot u_\alpha)=wk^*\hbox{-}\lim\limits_{\alpha}a\cdot\theta^{**}(u_\alpha)=a\cdot\theta^{**}(F),
\end{equation}
and
\begin{equation}\label{e2.3}
\theta^{**}(F\cdot a)=wk^*\hbox{-}\lim\limits_{\alpha}\theta^{**}(u_\alpha\cdot a)=wk^*\hbox{-}\lim\limits_{\alpha}\varphi(a)\theta^{**}(u_\alpha)=\varphi(a)\theta^{**}(F),
\end{equation}
and (\ref{e2.02}) implies that
\begin{equation}\label{e2.4}
\langle\varphi,\theta^{**}(F)\rangle=\lim\limits_{\alpha}\langle\varphi,\theta^{**}(u_{\alpha})\rangle=\lim\limits_{\alpha}\langle\varphi,\pi_{\mathcal{A}}^{**}(u_{\alpha})\rangle=\langle\varphi,\pi_{\mathcal{A}}^{**}(F)\rangle.
\end{equation} 
For every $a\in{\mathcal{A}}$, $f\in{{\mathcal{A}}^{*}}$ and $u\in{\mathcal{A}\hat{\otimes}\mathcal{A}}$ by (\ref{e2.1}) we have
\begin{equation*}
\langle u,a\cdot\theta^*(f)\rangle=\langle u\cdot a,\theta^*(f)\rangle=\langle\theta(u\cdot a),f\rangle=\varphi(a)\langle\theta(u),f\rangle=\varphi(a)\langle u,\theta^*(f)\rangle,
\end{equation*}
and
\begin{equation*}
\langle u,\theta^*(f)\cdot a\rangle=\langle a\cdot u,\theta^*(f)\rangle=\langle\theta(a\cdot u),f\rangle=\langle a\theta(u),f\rangle=\langle\theta(u),f\cdot a \rangle=\langle u,\theta^*(f\cdot a)\rangle.
\end{equation*}
So  
\begin{equation}\label{e2.5}
a\cdot\theta^*(f)=\varphi(a)\theta^*(f),\quad \theta^*(f)\cdot a=\theta^*(f\cdot a)\quad(a\in{\mathcal{A}}, f\in{{\mathcal{A}}^{*}}).
\end{equation}
 Since $\varphi$ is $wk^*$-continuous, (\ref{e2.5}) implies that
  \begin{equation}\label{e2.6}
  \theta^*(\sigma wc(\mathcal{A}^*))\subseteq\sigma{wc}(\mathcal{A}\hat{\otimes}\mathcal{A})^{\ast}.
  \end{equation}
   Consider the quotient map $q:\mathcal{A}^{**}\rightarrow(\sigma wc(\mathcal{A}^*))^*$. By (\ref{e2.2}), (\ref{e2.3}) and (\ref{e2.6}) for every $a\in{\mathcal{A}}$ and $f\in{\sigma wc(\mathcal{A}^*)}$ we have
\begin{equation*}
\begin{split}
\langle f\cdot a,q\circ\theta^{**}(m_\alpha)\rangle&=\langle f\cdot a,\theta^{**}(m_\alpha)\vert_{\sigma{wc}({\mathcal{A}}^{\ast})}\rangle=\langle f\cdot a,\theta^{**}(m_\alpha)\rangle=\langle f, a\cdot\theta^{**}(m_\alpha)\rangle\\&=\langle f,\theta^{**}(a\cdot m_\alpha)\rangle=\langle \theta^*(f),a\cdot m_\alpha\rangle=\langle \theta^*(f),m_\alpha\cdot a\rangle=\langle f,\theta^{**}(m_\alpha\cdot a)\rangle\\&=\langle f,\varphi(a)\theta^{**}(m_\alpha)\rangle=\varphi(a)\langle f,\theta^{**}(m_\alpha)\vert_{\sigma{wc}({\mathcal{A}}^{\ast})}\rangle=\varphi(a)\langle f,q\circ\theta^{**}(m_\alpha)\rangle.
\end{split}
\end{equation*}
 Since $\varphi$ is $wk^*$-continuous, by \cite[Chapter V; Theorem 1.3]{Con:85}, $\varphi\in{\mathcal{A}_*}$. Note that $\mathcal{A}_*\subseteq\sigma wc(\mathcal{A}^*)$. Using (\ref{e2.4}) we have
\begin{equation}\label{e2.7}
\begin{split}
\langle \varphi,q\circ\theta^{**}(m_\alpha)\rangle&=\langle\varphi,\theta^{**}(m_\alpha)\vert_{\sigma{wc}({\mathcal{A}}^{\ast})}\rangle=\langle\varphi,\theta^{**}(m_\alpha)\rangle=\langle\varphi,\pi_{\mathcal{A}}^{**}(m_\alpha)\rangle\\&=\langle i_{\mathcal{A}_*}(\varphi),\pi_{\mathcal{A}}^{**}(m_\alpha)\rangle=\langle \varphi,i^*_{\mathcal{A}_*}\pi_{\mathcal{A}}^{**}(m_\alpha)\rangle.
\end{split}
\end{equation}
Since $i^{\ast}_{\mathcal{A}_{\ast}}\pi_{\mathcal{A}}^{**}(m_{\alpha}){a}\rightarrow{a}$ for every $a\in{\mathcal{A}}$ and $\varphi$ is continuous,
\begin{equation*}
\lim\limits_{\alpha}\langle\varphi,i^{\ast}_{\mathcal{A}_{\ast}}\pi_{\mathcal{A}}^{**}(m_{\alpha})\rangle\varphi(a)=\lim\limits_{\alpha}\langle\varphi,i^{\ast}_{\mathcal{A}_{\ast}}\pi_{\mathcal{A}}^{**}(m_{\alpha}){a}\rangle={\varphi(a)}.
\end{equation*}
(\ref{e2.7}) implies that $\lim\limits_{\alpha}\langle \varphi,q\circ\theta^{**}(m_\alpha)\rangle=1$ in $\mathbb{C}$. For sufficiently large $\alpha$, $\langle\varphi, q\circ\theta^{**}(m_\alpha)\rangle$ stays away from zero. Replacing $q\circ\theta^{**}(m_\alpha)$ by $\dfrac{q\circ\theta^{**}(m_\alpha)}{\langle\varphi,q\circ\theta^{**}(m_\alpha)\rangle}$, we may assume that
\begin{equation*}
\langle f\cdot a,q\circ\theta^{**}(m_\alpha)\rangle=\varphi(a)\langle f,q\circ\theta^{**}(m_\alpha)\rangle,\qquad\langle \varphi,q\circ\theta^{**}(m_\alpha)\rangle=1,
\end{equation*}
 for every $a\in{\mathcal{A}}$ and $f\in{\sigma wc(\mathcal{A}^*)}$. So $\mathcal{A}$ is $\varphi$-Connes amenable.
\end{proof}

\begin{Proposition}\label{p9}
	Let $\mathcal{A}$ and $\mathcal{B}$ be  dual Banach algebras. Suppose that $\theta:\mathcal{A}\longrightarrow\mathcal{B}$ is a continuous epimorphism which is also $wk^*$-continuous. If $\mathcal{A}$ is Johnson pseudo-Connes amenable, then $\mathcal{B}$ is Johnson pseudo-Connes amenable.
\end{Proposition}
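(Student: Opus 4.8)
The plan is to push the net witnessing Johnson pseudo-Connes amenability of $\mathcal{A}$ forward along $\theta\otimes\theta$. First I would collect the structural facts: the bounded map $\theta\otimes\theta\colon\mathcal{A}\hat{\otimes}\mathcal{A}\to\mathcal{B}\hat{\otimes}\mathcal{B}$ satisfies $\pi_{\mathcal{B}}\circ(\theta\otimes\theta)=\theta\circ\pi_{\mathcal{A}}$ since $\theta$ is a homomorphism, and, regarding $\mathcal{B}\hat{\otimes}\mathcal{B}$ as a Banach $\mathcal{A}$-bimodule through $\theta$ (so $a\cdot w:=\theta(a)\cdot w$ and $w\cdot a:=w\cdot\theta(a)$), $\theta\otimes\theta$ is an $\mathcal{A}$-bimodule morphism. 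Consequently $(\theta\otimes\theta)^{*}$ and $(\theta\otimes\theta)^{**}$ are $\mathcal{A}$-bimodule morphisms for the induced dual and bidual module structures, and $\pi_{\mathcal{B}}^{**}\circ(\theta\otimes\theta)^{**}=\theta^{**}\circ\pi_{\mathcal{A}}^{**}$. If $(m_{\alpha})$ is a net in $(\mathcal{A}\hat{\otimes}\mathcal{A})^{**}$ as in the definition of Johnson pseudo-Connes amenability of $\mathcal{A}$, I would set $n_{\alpha}:=(\theta\otimes\theta)^{**}(m_{\alpha})$ and claim that $(n_{\alpha})$ witnesses Johnson pseudo-Connes amenability of $\mathcal{B}$.

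The heart of the matter, and the only place where the $wk^*$-continuity of $\theta$ is indispensable, is the inclusion $(\theta\otimes\theta)^{*}\bigl(\sigma{wc}(\mathcal{B}\hat{\otimes}\mathcal{B})^{*}\bigr)\subseteq\sigma{wc}(\mathcal{A}\hat{\otimes}\mathcal{A})^{*}$. Given $S\in\sigma{wc}(\mathcal{B}\hat{\otimes}\mathcal{B})^{*}$ and writing $T:=(\theta\otimes\theta)^{*}(S)$, a short adjoint computation yields $a\cdot T=(\theta\otimes\theta)^{*}\bigl(\theta(a)\cdot S\bigr)$ for $a\in\mathcal{A}$; thus $a\mapsto a\cdot T$ is the composition of $\theta\colon\mathcal{A}\to\mathcal{B}$ ($wk^*$-$wk^*$-continuous by hypothesis), of $b\mapsto b\cdot S$ from $\mathcal{B}$ to $(\mathcal{B}\hat{\otimes}\mathcal{B})^{*}$ ($wk^*$-$wk$-continuous because $S\in\sigma{wc}$), and of $(\theta\otimes\theta)^{*}$ ($wk$-$wk$-continuous, being a bounded linear map), hence is $wk^*$-$wk$-continuous; the same applies to $a\mapsto T\cdot a$, so $T\in\sigma{wc}(\mathcal{A}\hat{\otimes}\mathcal{A})^{*}$. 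Now fix $b\in\mathcal{B}$ and, using surjectivity, choose $a\in\mathcal{A}$ with $\theta(a)=b$. Since $(\theta\otimes\theta)^{**}$ is an $\mathcal{A}$-bimodule morphism, $b\cdot n_{\alpha}=(\theta\otimes\theta)^{**}(a\cdot m_{\alpha})$ and $n_{\alpha}\cdot b=(\theta\otimes\theta)^{**}(m_{\alpha}\cdot a)$, so for every $S\in\sigma{wc}(\mathcal{B}\hat{\otimes}\mathcal{B})^{*}$,
\[
\langle S,b\cdot n_{\alpha}\rangle=\langle T,a\cdot m_{\alpha}\rangle=\langle T,m_{\alpha}\cdot a\rangle=\langle S,n_{\alpha}\cdot b\rangle,
\]
the middle equality being the first defining property of $(m_{\alpha})$ applied to $T\in\sigma{wc}(\mathcal{A}\hat{\otimes}\mathcal{A})^{*}$.

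For the second condition I would establish $i^{*}_{\mathcal{B}_{*}}\circ\theta^{**}=\theta\circ i^{*}_{\mathcal{A}_{*}}$ by a direct computation (pairing against $\mathcal{B}_{*}$ and using that $\theta^{*}$ carries $\mathcal{B}_{*}$ into $\mathcal{A}_{*}$, which is precisely the $wk^*$-continuity of $\theta$). Combined with $\pi_{\mathcal{B}}^{**}\circ(\theta\otimes\theta)^{**}=\theta^{**}\circ\pi_{\mathcal{A}}^{**}$ this gives $i^{*}_{\mathcal{B}_{*}}\pi_{\mathcal{B}}^{**}(n_{\alpha})=\theta\bigl(i^{*}_{\mathcal{A}_{*}}\pi_{\mathcal{A}}^{**}(m_{\alpha})\bigr)$. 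Hence, for $b=\theta(a)$ and using that $\theta$ is a continuous homomorphism,
\[
i^{*}_{\mathcal{B}_{*}}\pi_{\mathcal{B}}^{**}(n_{\alpha})\,b=\theta\bigl(i^{*}_{\mathcal{A}_{*}}\pi_{\mathcal{A}}^{**}(m_{\alpha})\bigr)\theta(a)=\theta\bigl(i^{*}_{\mathcal{A}_{*}}\pi_{\mathcal{A}}^{**}(m_{\alpha})\,a\bigr)\longrightarrow\theta(a)=b,
\]
because $i^{*}_{\mathcal{A}_{*}}\pi_{\mathcal{A}}^{**}(m_{\alpha})\,a\to a$ in norm. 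This completes the verification that $(n_{\alpha})$ is the required net.

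I expect no deep obstacle: the argument is a transport of structure along $\theta\otimes\theta$, and the bulk of the work is the careful identification of the various adjoints with the module actions and the embeddings $i_{\mathcal{A}_{*}},i_{\mathcal{B}_{*}}$, together with the standard facts that the second adjoint of a bounded bimodule morphism is again a bimodule morphism and that bounded operators are $wk$-$wk$-continuous. The one genuinely load-bearing hypothesis is that $\theta$ is $wk^*$-continuous: this is what makes $(\theta\otimes\theta)^{*}$ preserve the $\sigma wc$-subspaces and what supplies the identity $i^{*}_{\mathcal{B}_{*}}\theta^{**}=\theta\,i^{*}_{\mathcal{A}_{*}}$; if it is dropped, the first leg $a\mapsto\theta(a)$ of the composition above need not be $wk^*$-$wk^*$-continuous and the argument breaks down.
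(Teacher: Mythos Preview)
Your proposal is correct and follows essentially the same route as the paper's proof: push the witnessing net forward via $(\theta\otimes\theta)^{**}$, verify that $(\theta\otimes\theta)^{*}$ maps $\sigma wc(\mathcal{B}\hat{\otimes}\mathcal{B})^{*}$ into $\sigma wc(\mathcal{A}\hat{\otimes}\mathcal{A})^{*}$ using the $wk^{*}$-continuity of $\theta$, and use the identity $i^{*}_{\mathcal{B}_{*}}\theta^{**}=\theta\,i^{*}_{\mathcal{A}_{*}}$ together with $\pi_{\mathcal{B}}\circ(\theta\otimes\theta)=\theta\circ\pi_{\mathcal{A}}$ for the second condition. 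The only cosmetic differences are that the paper establishes the bimodule property of $(\theta\otimes\theta)^{**}$ explicitly via Goldstine's theorem rather than invoking it as a standard fact, and your composition argument for the $\sigma wc$-preservation is slightly more detailed than the paper's one-line appeal to $wk^{*}$-continuity.
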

\begin{proof}
Since $\mathcal{A}$ is Johnson pseudo-Connes amenable, there exists a net $(m_{\alpha})$ in $(\mathcal{A}\hat{\otimes}\mathcal{A})^{**}$ such that 
\begin{equation*}
\langle{T},a\cdot{m_{\alpha}}\rangle=\langle{T},{m_{\alpha}}\cdot{a}\rangle\quad\hbox{and} \quad i^{\ast}_{\mathcal{A}_{\ast}}\pi_{\mathcal{A}}^{**}(m_{\alpha}){a}\rightarrow{a},
\end{equation*}
 for every $a\in{\mathcal{A}}$ and $T\in\sigma{wc}(\mathcal{A}\hat{\otimes}\mathcal{A})^{\ast}$. Define $\theta\otimes\theta:\mathcal{A}\hat{\otimes}\mathcal{A}\rightarrow\mathcal{B}\hat{\otimes}\mathcal{B}$ by $\theta\otimes\theta(x\otimes y)=\theta(x)\otimes\theta(y)$, for every $x,y\in{\mathcal{A}}$. So $\theta\otimes\theta$ is a bounded linear map. For every $a,b,c\in{\mathcal{A}}$ we have
 \begin{equation*}
 \begin{split}
 (\theta\otimes\theta)(a\cdot (b\otimes c))&=\theta(ab)\otimes\theta(c)=\theta(a)\theta(b)\otimes\theta(c)\\&=\theta(a)\cdot(\theta(b)\otimes\theta(c))=\theta(a)\cdot(\theta\otimes\theta(b\otimes c)).
\end{split}
 \end{equation*}
 By similarity for the right action, for every $a\in{\mathcal{A}}$ and $u\in{\mathcal{A}\hat{\otimes}\mathcal{A}}$ we have
 \begin{equation}\label{e2.8}
 \theta(a)\cdot(\theta\otimes\theta)(u)=(\theta\otimes\theta)(a\cdot u),\qquad(\theta\otimes\theta)(u)\cdot\theta(a)=(\theta\otimes\theta)(u\cdot a).
 \end{equation}
  By Goldstein's Theorem for every $F\in{(\mathcal{A}\hat{\otimes}\mathcal{A})^{**}}$  there is a bounded net $(u_\alpha)$ in $\mathcal{A}\hat{\otimes}\mathcal{A}$ such that $wk^*\hbox{-}\lim\limits_{\alpha}u_\alpha=F$. Since $(\theta\otimes\theta)^{**}$ is a $wk^*$-continuous map, (\ref{e2.8}) implies that for every $a\in{\mathcal{A}}$
\begin{equation}\label{e2.9}
\theta(a)\cdot(\theta\otimes\theta)^{**}(F)=wk^*\hbox{-}\lim\limits_{\alpha}\theta(a)\cdot(\theta\otimes\theta)^{**}(u_{\alpha})=wk^*\hbox{-}\lim\limits_{\alpha}(\theta\otimes\theta)^{**}(a\cdot u_{\alpha})=(\theta\otimes\theta)^{**}(a\cdot F),
\end{equation}
and
\begin{equation}\label{e2.10}
(\theta\otimes\theta)^{**}(F)\cdot\theta(a)=wk^*\hbox{-}\lim\limits_{\alpha}(\theta\otimes\theta)^{**}(u_{\alpha})\cdot\theta(a)=wk^*\hbox{-}\lim\limits_{\alpha}(\theta\otimes\theta)^{**}(u_{\alpha}\cdot a)=(\theta\otimes\theta)^{**}(F\cdot a).
\end{equation}
Using (\ref{e2.8}) for every $a\in{\mathcal{A}}$, $u\in{\mathcal{A}\hat{\otimes}\mathcal{A}}$ and $f\in{(\mathcal{B}\hat{\otimes}\mathcal{B})^{*}}$ we have
\begin{equation*}
\begin{split}
\langle u,a\cdot(\theta\otimes\theta)^{*}(f)\rangle&=\langle u\cdot a,(\theta\otimes\theta)^{*}(f)\rangle=\langle\theta\otimes\theta(u\cdot a),f\rangle\\&=\langle(\theta\otimes\theta)(u),\theta(a)\cdot f\rangle=\langle u,(\theta\otimes\theta)^{*}(\theta(a)\cdot f)\rangle.
\end{split}
\end{equation*}
So by similarity for the right action, we have
\begin{equation}\label{e2.11}
a\cdot(\theta\otimes\theta)^{*}(f)=(\theta\otimes\theta)^{*}(\theta(a)\cdot f),\qquad (\theta\otimes\theta)^{*}(f)\cdot a=(\theta\otimes\theta)^{*}(f\cdot\theta(a)).
\end{equation}
Since $\theta$ is a $wk^*$-continuous map, (\ref{e2.11}) implies that
\begin{equation}\label{e2.12}
(\theta\otimes\theta)^{*}(\sigma wc(\mathcal{B}\hat{\otimes}\mathcal{B})^{*})\subseteq\sigma wc(\mathcal{A}\hat{\otimes}\mathcal{A})^{*}.
\end{equation}
 By (\ref{e2.9}), (\ref{e2.10}) and (\ref{e2.12}) for every $a\in{\mathcal{A}}$ and $U\in{\sigma wc(\mathcal{B}\hat{\otimes}\mathcal{B})^{*}}$ we have
\begin{equation*}
\begin{split}
\langle U,\theta(a)\cdot(\theta\otimes\theta)^{**}(m_\alpha)\rangle&=\langle U,(\theta\otimes\theta)^{**}(a\cdot m_\alpha)\rangle=\langle (\theta\otimes\theta)^{*}(U),a\cdot m_\alpha\rangle=\langle (\theta\otimes\theta)^{*}(U),m_\alpha\cdot a\rangle\\&=\langle U,(\theta\otimes\theta)^{**}( m_\alpha\cdot a)\rangle=\langle U,(\theta\otimes\theta)^{**}(m_\alpha)\cdot\theta(a)\rangle.
\end{split}
\end{equation*}
Using \cite[Chapter V; Theorem 1.3]{Con:85}, $\theta^{*}(\mathcal{B}_*)\subseteq\mathcal{A}_*$. For every $\psi\in{\mathcal{A}^{**}}$ and $h\in{\mathcal{B}_*}$ we have
\begin{equation*}
\langle i_{\mathcal{B}_{*}}^*\theta^{**}(\psi),h\rangle=\langle\theta^{**}(\psi),i_{\mathcal{B}_{*}}(h)\rangle=\langle\theta^{**}(\psi),h\rangle=\langle\psi,\theta^{*}(h)\rangle=\langle\psi,i_{\mathcal{A}_{*}}\theta^{*}(h)\rangle=\langle\theta i_{\mathcal{A}_{*}}^*(\psi),h\rangle,
\end{equation*}
 thus
 \begin{equation}\label{e2.13}
i_{\mathcal{B}_{*}}^*\theta^{**}=\theta i_{\mathcal{A}_{*}}^*.
 \end{equation}
 Since $\pi_{\mathcal{B}}\circ\theta\otimes\theta=\theta\circ\pi_{\mathcal{A}}$, (\ref{e2.13}) implies that for every $a\in{\mathcal{A}}$ 
\begin{equation*}
\lim\limits_{\alpha}(i_{\mathcal{B}_{*}}^*\pi_{\mathcal{B}}^{**}(\theta\otimes\theta)^{**}( m_\alpha))\theta(a)=\lim\limits_{\alpha}(\theta i_{\mathcal{A}_{*}}^*\pi_{\mathcal{A}}^{**}(m_\alpha))\theta(a)=\theta(\lim\limits_{\alpha}i_{\mathcal{A}_{*}}^*\pi_{\mathcal{A}}^{**}(m_\alpha)a)=\theta(a).
\end{equation*}
So $\mathcal{B}$ is Johnson pseudo-Connes amenable.
\end{proof}
\begin{cor}
		Let $\mathcal{A}$ be a dual Banach algebra and let $I$ be a $wk^*$-closed ideal of $\mathcal{A}$. If $\mathcal{A}$ is Johnson pseudo-Connes amenable, then ${\mathcal{A}}/{I}$ is Johnson pseudo-Connes amenable.
\end{cor}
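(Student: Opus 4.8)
The plan is to realize the quotient map $\mathcal{A}\to\mathcal{A}/I$ as a $wk^*$-continuous continuous epimorphism of dual Banach algebras and then invoke Proposition~\ref{p9}. So the only work is the standard bookkeeping that $\mathcal{A}/I$ is a dual Banach algebra and that the quotient map respects the $wk^*$-topologies; there is no essential obstacle here.

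First I would produce the predual. Write $\mathcal{A}=(\mathcal{A}_*)^*$ and set $I_\perp=\{f\in\mathcal{A}_*:\langle a,f\rangle=0 \text{ for all } a\in I\}$, the pre-annihilator of $I$ inside $\mathcal{A}_*$. Since $I$ is an ideal, $I_\perp$ is a closed $\mathcal{A}$-submodule of $\mathcal{A}_*$: for $f\in I_\perp$, $a\in\mathcal{A}$ and $b\in I$ one has $ba\in I$ and $ab\in I$, hence $\langle b,a\cdot f\rangle=\langle ba,f\rangle=0$ and $\langle b,f\cdot a\rangle=\langle ab,f\rangle=0$, so $a\cdot f,\ f\cdot a\in I_\perp$. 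Because $I$ is $wk^*$-closed we have $I=(I_\perp)^\perp$, and therefore the natural map $\mathcal{A}/I\to(I_\perp)^*$ is an isometric isomorphism. Since $I_\perp$ is an $\mathcal{A}$-submodule of $\mathcal{A}_*$, the induced product on $\mathcal{A}/I=(I_\perp)^*$ is separately $wk^*$-continuous, so $\mathcal{A}/I$ is a dual Banach algebra with predual $I_\perp$.

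Next I would check that the quotient map $\theta:\mathcal{A}\to\mathcal{A}/I$ is $wk^*$-continuous. Under the identification $\mathcal{A}/I\cong(I_\perp)^*$, the map $\theta$ coincides with the adjoint $\iota^*$ of the inclusion $\iota:I_\perp\hookrightarrow\mathcal{A}_*$: for $a\in\mathcal{A}$ and $f\in I_\perp$,
\[
\langle\theta(a),f\rangle=\langle a+I,f\rangle=\langle a,f\rangle=\langle a,\iota(f)\rangle=\langle\iota^*(a),f\rangle,
\]
where the second equality is well defined precisely because $f$ annihilates $I$. Being the adjoint of a bounded linear operator, $\theta=\iota^*$ is automatically $wk^*$-$wk^*$-continuous, and it is plainly a continuous algebra epimorphism.

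Finally, applying Proposition~\ref{p9} with $\mathcal{B}=\mathcal{A}/I$ and with $\theta$ the quotient map gives that $\mathcal{A}/I$ is Johnson pseudo-Connes amenable. The main (and only mild) point is the identification of $\mathcal{A}/I$ as a dual Banach algebra with predual $I_\perp$ together with the observation that the quotient map is an adjoint map, hence $wk^*$-continuous; everything else is immediate from Proposition~\ref{p9}.
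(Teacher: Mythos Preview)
Your proposal is correct and follows exactly the same route as the paper: apply Proposition~\ref{p9} to the quotient map $\mathcal{A}\to\mathcal{A}/I$, noting that it is a $wk^*$-continuous epimorphism. The paper's proof is a single sentence that takes the dual Banach algebra structure on $\mathcal{A}/I$ and the $wk^*$-continuity of the quotient map for granted, whereas you have spelled out these standard facts in detail.
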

\begin{proof}
Since the quotient map $Q:\mathcal{A}\rightarrow{\mathcal{A}}/{I}$ is a $wk^*$-continuous map, by Proposition \ref{p9} the dual Banach algebra ${\mathcal{A}}/{I}$ is Johnson pseudo-Connes amenable. 
	\end{proof}
%---------------------------------------------------------------------------------------------
%--------------------------------------------
\section{Some Applications}

\begin{Proposition}
	Let G be a locally compact group. The measure algebra $M(G)$ is Johnson pseudo-Connes amenable if and only if $G$ is amenable.
\end{Proposition}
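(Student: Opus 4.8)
The plan is to establish the two implications separately, each time reducing to a rigidity property of measure algebras through the implications of Section~2. I first record two standing facts that will be used: $M(G)$ is a dual Banach algebra with predual $C_0(G)$, and it is unital, with identity the point mass $\delta_e$. For the ``if'' direction, suppose $G$ is amenable; then, by Runde's characterisation of Connes amenability for measure algebras (see \cite{Runde:2004} and the references therein), $M(G)$ is Connes amenable, and Lemma~\ref{l3} at once yields that $M(G)$ is Johnson pseudo-Connes amenable. This direction is essentially free once Runde's theorem is in hand.

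For the ``only if'' direction, suppose $M(G)$ is Johnson pseudo-Connes amenable. Using unitality, Lemma~\ref{l5} shows that $M(G)$ is approximately Connes amenable; alternatively, our lemma asserting that Johnson pseudo-Connes amenability implies pseudo-Connes amenability gives the same conclusion with ``pseudo'' in place of ``approximately,'' and without invoking unitality. It then remains to apply the measure-algebra analogue of Runde's theorem for these weaker notions --- namely, that a measure algebra which is approximately (respectively, pseudo-) Connes amenable has amenable underlying group; see \cite{Eslam:2012} and \cite{Mahmoodi:14}. Combined with the direction just established, this gives the stated equivalence. I note in passing that Proposition~\ref{p6} is \emph{not} the right tool here: the natural character on $M(G)$, the augmentation $\mu\mapsto\mu(G)$, is represented by the constant function $1$, which belongs to $C_0(G)$ only when $G$ is compact, so it fails to be $wk^{\ast}$-continuous in general.

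The genuine content --- and hence the main obstacle, should one want a self-contained argument in place of the cited rigidity --- lies in that last reduction. Arguing directly, one starts from the (possibly unbounded) net $(m_{\alpha})$ in $(M(G)\hat\otimes M(G))^{\ast\ast}$, pushes it down through the quotient map $q$ to $(\sigma wc((M(G)\hat\otimes M(G))^{\ast}))^{\ast}$ as in the proofs of Lemmas~\ref{l3} and \ref{l5}, uses the known description of the relevant $\sigma wc$-space for $M(G)$ in terms of (left) uniformly continuous functions on $G$, and extracts from the resulting approximate-invariance relations a left-invariant mean on $L^{\infty}(G)$ (equivalently on $LUC(G)$). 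The delicate points are the lack of boundedness of $(m_{\alpha})$ --- so that one must first project to $\pi_{\sigma wc}q(m_{\alpha})$ rather than take a $wk^{\ast}$-cluster point of $(m_{\alpha})$ itself --- and the fact that invariance is tested only against $\sigma wc$-elements; both are handled by the normalisation already used in the proof of Proposition~\ref{p6}, dividing along the net by a fixed $wk^{\ast}$-continuous functional bounded away from $0$. Finally one observes that this equivalence separates the present notion from Johnson pseudo-contractibility, whose analogue forces $G$ to be discrete and amenable.
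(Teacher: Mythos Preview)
Your proposal is correct and follows essentially the same route as the paper: for the forward direction you invoke unitality of $M(G)$ and Lemma~\ref{l5} to pass to approximate Connes amenability and then cite \cite{Eslam:2012} for amenability of $G$, and for the converse you combine Runde's theorem that amenability of $G$ gives Connes amenability of $M(G)$ with Lemma~\ref{l3}. The additional commentary you supply (the alternative via pseudo-Connes amenability, the inapplicability of Proposition~\ref{p6}, and the sketch of a direct argument) is sound but goes beyond what the paper's short proof records; note also that the precise reference for the Connes amenability of $M(G)$ is \cite{Runde:2003} rather than \cite{Runde:2004}.
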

\begin{proof}
	Let $M(G)$ be Johnson pseudo-Connes amenable. Since $M(G)$ is unital, by Lemma \ref{l5}, $M(G)$ is approximate Connes amenable. Hence $G$ is amenable \cite[Theorem 5.2]{Eslam:2012}.\\
	Conversely if $G$ is amenable, then $M(G)$ is Connes amenable \cite[Theorem 5.4]{Runde:2003}. Thus by Lemma \ref{l3}, $M(G)$ is Johnson pseudo-Connes amenable.
\end{proof}
Let $\mathcal{A}$ be a Banach algebra, $I$ and $J$ be arbitrary nonempty index sets
and let $P$ be a $J\times I$ matrix over $\mathcal{A}$ such that $\Vert P\Vert_{\infty}=\sup\{\Vert P_{j,i}\Vert:j\in{J},i\in{I}\}\leq1$. The set 
 of all $I\times J$ matrices over $\mathcal{A}$ with finite $\ell^1$-norm and product $XY=XPY$ is a Banach algebra, which is denoted by $LM(\mathcal{A},P)$ and it
is called the $\ell^1$-Munn $I\times J$ matrix algebra over $\mathcal{A}$ with sandwich matrix $P$ or briefly the $\ell^1$-Munn algebra \cite{eslam:04}.

 Suppose that $\mathcal{A}$ is a unital dual Banach algebra. Shojaee {\it et al.} showed that the $\ell^1$-Munn algebra $LM(\mathcal{A},P)=\ell^1(I\times J,\mathcal{A})$ is a dual Banach algebra with respect to the predual $c_0(I\times J,\mathcal{A}_*)$, where $I=J$ \cite{shoj:09}.
 
The Banach algebra of $I\times I$-matrices over $\mathbb{C}$, with finite $\ell^1$-norm and matrix multiplication is denoted by $\mathbb{M}_{I}(\mathbb{C})$, where $I$ is an arbitrary set. So $\mathbb{M}_{I}(\mathbb{C})$ is a dual $\ell^1$-Munn algebra over $\mathbb{C}$ with sandwich
matrix $P=id$.
\begin{Theorem}
	Let $I$ be a non-empty set. Then $\mathbb{M}_{I}(\mathbb{C})$ is Johnson pseudo-Connes amenable if and only if $I$ is finite.
\end{Theorem}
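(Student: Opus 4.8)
The plan is to treat the two implications separately; the forward one reduces quickly to the finite-dimensional case, while the reverse one carries the real content. For the ``if'' direction, suppose $I$ is finite, so that $\mathbb{M}_{I}(\mathbb{C})\cong M_{n}(\mathbb{C})$ with $n=|I|$, a finite-dimensional (hence Arens regular and dual) Banach algebra. In finite dimensions the $wk^{\ast}$-topology is the norm topology, so $\sigma{wc}((M_{n}(\mathbb{C})\hat{\otimes}M_{n}(\mathbb{C}))^{\ast})=(M_{n}(\mathbb{C})\hat{\otimes}M_{n}(\mathbb{C}))^{\ast}$ and Connes amenability coincides with amenability; since $M_{n}(\mathbb{C})$ is amenable (it carries the diagonal $\frac{1}{n}\sum_{i,j}E_{i,j}\otimes E_{j,i}$) it is Connes amenable, and Lemma \ref{l3} shows $\mathbb{M}_{I}(\mathbb{C})$ is Johnson pseudo-Connes amenable.

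For the ``only if'' direction I would argue by contradiction: assume $I$ is infinite and that $\mathcal{A}=\mathbb{M}_{I}(\mathbb{C})$ is Johnson pseudo-Connes amenable, with net $(m_{\alpha})$ in $(\mathcal{A}\hat{\otimes}\mathcal{A})^{\ast\ast}$ as in the Definition. Write $\{E_{r,s}\}_{r,s\in I}$ for the matrix units and $\{\delta_{(p,q)}\}_{p,q\in I}\subseteq\mathcal{A}_{\ast}=c_{0}(I\times I)$ for the coordinate functionals; by the preliminaries $\pi^{\ast}(\delta_{(p,q)})\in\sigma{wc}(\mathcal{A}\hat{\otimes}\mathcal{A})^{\ast}$, and from $\pi^{\ast}(\delta_{(p,q)})(E_{r,s}\otimes E_{t,u})=\delta_{s,t}\delta_{r,p}\delta_{u,q}$ one checks, by evaluating on elementary tensors, the identities
\[
\pi^{\ast}(\delta_{(p,q)})\cdot E_{i,j}=\delta_{i,p}\,\pi^{\ast}(\delta_{(j,q)}),\qquad E_{i,j}\cdot\pi^{\ast}(\delta_{(p,q)})=\delta_{j,q}\,\pi^{\ast}(\delta_{(p,i)}),
\]
in $(\mathcal{A}\hat{\otimes}\mathcal{A})^{\ast}$. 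Taking $T=\pi^{\ast}(\delta_{(p,q)})$ and $a=E_{i,j}$ in $\langle T,a\cdot m_{\alpha}\rangle=\langle T,m_{\alpha}\cdot a\rangle$ and rewriting via the bidual actions $\langle T,a\cdot m_{\alpha}\rangle=\langle m_{\alpha},T\cdot a\rangle$, $\langle T,m_{\alpha}\cdot a\rangle=\langle m_{\alpha},a\cdot T\rangle$, this gives $\delta_{i,p}\langle m_{\alpha},\pi^{\ast}(\delta_{(j,q)})\rangle=\delta_{j,q}\langle m_{\alpha},\pi^{\ast}(\delta_{(p,i)})\rangle$ for all $i,j,p,q\in I$. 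Putting $i=p$ shows that $\langle m_{\alpha},\pi^{\ast}(\delta_{(j,q)})\rangle=0$ whenever $j\neq q$ and that $c_{\alpha}:=\langle m_{\alpha},\pi^{\ast}(\delta_{(p,p)})\rangle$ is independent of $p$; hence $\langle m_{\alpha},\pi^{\ast}(\delta_{(p,q)})\rangle=\delta_{p,q}\,c_{\alpha}$.

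To conclude I would invoke Remark \ref{R1} in the form $\langle\pi^{\ast}(\delta_{(p,q)}),m_{\alpha}\rangle=\langle\delta_{(p,q)},i^{\ast}_{\mathcal{A}_{\ast}}\pi_{\mathcal{A}}^{**}(m_{\alpha})\rangle$, which identifies the $(p,q)$-entry of $i^{\ast}_{\mathcal{A}_{\ast}}\pi_{\mathcal{A}}^{**}(m_{\alpha})\in\mathbb{M}_{I}(\mathbb{C})$ with $\delta_{p,q}\,c_{\alpha}$. Thus $i^{\ast}_{\mathcal{A}_{\ast}}\pi_{\mathcal{A}}^{**}(m_{\alpha})$ is the scalar diagonal $c_{\alpha}\sum_{p\in I}E_{p,p}$, of $\ell^{1}$-norm $|c_{\alpha}|\,|I|$; since it lies in $\mathbb{M}_{I}(\mathbb{C})=\ell^{1}(I\times I)$ and $I$ is infinite, necessarily $c_{\alpha}=0$, so $i^{\ast}_{\mathcal{A}_{\ast}}\pi_{\mathcal{A}}^{**}(m_{\alpha})=0$ for every $\alpha$. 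This contradicts $i^{\ast}_{\mathcal{A}_{\ast}}\pi_{\mathcal{A}}^{**}(m_{\alpha})a\to a$ for any nonzero $a\in\mathbb{M}_{I}(\mathbb{C})$, and the contradiction forces $I$ to be finite.

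The step I expect to be decisive is the use of the \emph{exact} equality $\langle T,a\cdot m_{\alpha}\rangle=\langle T,m_{\alpha}\cdot a\rangle$ rather than an approximate one. Indeed $\mathbb{M}_{I}(\mathbb{C})$ is pseudo-Connes amenable for every $I$: one checks that the net $n_{F}=|F|^{-1}\sum_{i,j\in F}E_{i,j}\otimes E_{j,i}$, indexed by the finite subsets $F$ of $I$, satisfies $\pi(n_{F})=\sum_{i\in F}E_{i,i}$ and $a\cdot n_{F}-n_{F}\cdot a\to 0$ in the sense required by \cite[Definition 4.3]{Mahmoodi:14}. So it is precisely the rigidity of the exact commutation, tested against the functionals $\pi^{\ast}(\delta_{(p,q)})$, that rules out the infinite case; by contrast the accompanying bookkeeping with matrix-unit actions on $(\mathcal{A}\hat{\otimes}\mathcal{A})^{\ast}$ and its bidual, while elementary, must be carried out with care.
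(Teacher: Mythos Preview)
Your proof is correct, and for the ``only if'' direction it is genuinely different from (and cleaner than) the paper's argument. The paper proceeds by approximating each $m_{\alpha}$ via Goldstine's theorem by a bounded net $(x^{\beta}_{\alpha})$ in $\mathcal{A}\hat{\otimes}\mathcal{A}$, sets $y_{\beta}=\pi(x^{\beta}_{\alpha})$, and then analyses the matrix entries of $y_{\beta}$: the approximate commutation $wk^{\ast}$-$\lim_{\beta}(ay_{\beta}-y_{\beta}a)=0$ forces the off-diagonal entries to vanish and the diagonal entries to a common limit $l$, after which a case split ($l=0$ versus $l\neq 0$), a subnet extraction, and a Mazur-type passage from weak to norm closure yield the contradiction. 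By contrast you work directly in $(\mathcal{A}\hat{\otimes}\mathcal{A})^{\ast\ast}$: testing the \emph{exact} identity $\langle T,a\cdot m_{\alpha}\rangle=\langle T,m_{\alpha}\cdot a\rangle$ against $T=\pi^{\ast}(\delta_{(p,q)})\in\sigma wc(\mathcal{A}\hat{\otimes}\mathcal{A})^{\ast}$ and $a=E_{i,j}$ immediately pins down all entries of $i^{\ast}_{\mathcal{A}_{\ast}}\pi_{\mathcal{A}}^{\ast\ast}(m_{\alpha})\in\mathcal{A}$ as $\delta_{p,q}\,c_{\alpha}$, so that membership in $\ell^{1}(I\times I)$ with $I$ infinite forces $c_{\alpha}=0$ and hence $i^{\ast}_{\mathcal{A}_{\ast}}\pi_{\mathcal{A}}^{\ast\ast}(m_{\alpha})=0$, an immediate contradiction. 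Your route avoids Goldstine, the iterated-limit bookkeeping, the subnet/compactness step, and the appeal to \cite[Theorem 4.3]{scha:71}; what the paper's approach buys is a template that would adapt to settings where only an approximate commutation is available, whereas your argument---as you rightly point out---depends essentially on the equality and would not survive the passage to pseudo-Connes amenability. The ``if'' direction in both proofs is the same: finite $I$ gives Connes amenability (either by finite-dimensional amenability or by \cite[Theorem 3.7]{Mah:2016}), and Lemma~\ref{l3} finishes.
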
	
\begin{proof}	
	Let $\mathcal{A}=\mathbb{M}_{I}(\mathbb{C})$ be Johnson pseudo-Connes amenable. Then there is a net $(m_{\alpha})$ in $(\mathcal{A}\hat{\otimes}\mathcal{A})^{**}$ such that $\langle{T},a\cdot{m_{\alpha}}\rangle=\langle{T},{m_{\alpha}}\cdot{a}\rangle$ and $i^{\ast}_{\mathcal{A}_{\ast}}\pi_{\mathcal{A}}^{**}(m_{\alpha})a\rightarrow{a}$ for every $a\in{\mathcal{A}}$ and $T\in\sigma\omega{c}(\mathcal{A}\hat{\otimes}\mathcal{A})^{\ast}$. Let $a$ be a non-zero element of $\mathcal{A}$. There exists a $\psi$ in $\mathcal{A}_{\ast}$ such that $a(\psi)\neq0$. Since $wk^*$-$\lim\limits_{\alpha}i^{\ast}_{\mathcal{A}_{\ast}}\pi_{\mathcal{A}}^{**}(m_{\alpha}){a}=a$, we have
	\begin{equation*}
	\lim\limits_{\alpha}\langle{a}\cdot\psi,i^{\ast}_{\mathcal{A}_{\ast}}\pi_{\mathcal{A}}^{**}(m_{\alpha})\rangle=\lim\limits_{\alpha}\langle\psi,i^{\ast}_{\mathcal{A}_{\ast}}\pi_{\mathcal{A}}^{**}(m_{\alpha})a\rangle=\langle\psi,a\rangle\neq0.
	\end{equation*}
	So we may assume that $\langle{a}\cdot\psi,i^{\ast}_{\mathcal{A}_{\ast}}\pi_{\mathcal{A}}^{**}(m_{\alpha})\rangle\neq0$. By Goldstein's theorem, there is a bounded net $(x^{\beta}_{\alpha})$ with bound $\Vert{m}_\alpha\Vert$ in $\mathcal{A}\hat{\otimes}\mathcal{A}$ such that $wk^{\ast}\hbox{-}\lim\limits_{\beta}{\hat{x}}^{\beta}_{\alpha}=m_{\alpha}$. It implies that for every $a\in{\mathcal{A}}$
	\begin{equation*}
	wk^{\ast}\hbox{-}\lim\limits_{\beta}a\cdot{\hat{x}}^{\beta}_{\alpha}=a\cdot{m_{\alpha}}\qquad\hbox{and}\qquad wk^{\ast}\hbox{-}\lim\limits_{\beta}{\hat{x}}^{\beta}_{\alpha}\cdot{a}={m_{\alpha}}\cdot{a}.
	\end{equation*}
	It follows that for every $T\in{\sigma\omega{c}(\mathcal{A}\hat{\otimes}\mathcal{A})^{\ast}}$,
	\begin{equation*}
	\begin{split}
	\lim\limits_{\beta}\langle{T},a\cdot q({\hat{x}}^{\beta}_{\alpha})-q({\hat{x}}^{\beta}_{\alpha})\cdot{a}\rangle=\lim\limits_{\beta}\langle{T},q(a\cdot{\hat{x}}^{\beta}_{\alpha}-{\hat{x}}^{\beta}_{\alpha}\cdot{a})\rangle&=\lim\limits_{\beta}\langle{T},a\cdot{\hat{x}}^{\beta}_{\alpha}-{\hat{x}}^{\beta}_{\alpha}\cdot{a}\rangle\\&=\langle{T},{a}\cdot{m_{\alpha}}-{m_{\alpha}}\cdot{a}\rangle=0,
	\end{split}
	\end{equation*}  
	where $q:(\mathcal{A}\hat{\otimes}\mathcal{A})^{\ast\ast}\rightarrow(\sigma{wc}({\mathcal{A}}\hat{\otimes}{\mathcal{A}})^{\ast})^{\ast}$ is the quotient map as in the Remark \ref{qou}. 
	So $wk^*$-$\lim\limits_{\beta}a\cdot q({\hat{x}}^{\beta}_{\alpha})-q({\hat{x}}^{\beta}_{\alpha})\cdot{a}=0$. Since $\pi_{\sigma{wc}}$ is a $wk^{\ast}$-continuous $\mathcal{A}$-bimodule morphism,
	\begin{equation}\label{e3.1}
	wk^*\hbox{-}\lim\limits_{\beta}a\pi_{\sigma{wc}}q({\hat{x}}^{\beta}_{\alpha})-\pi_{\sigma{wc}}q({\hat{x}}^{\beta}_{\alpha}){a}=0.
	\end{equation}
	For every $f\in{\mathcal{A}_{*}}$ we have
	\begin{equation*}
	\langle f,\pi_{\sigma{wc}}q({\hat{x}}^{\beta}_{\alpha})\rangle=\langle\pi^*\vert_{\mathcal{A}_{\ast}}(f),q({\hat{x}}^{\beta}_{\alpha})\rangle=\langle\pi^*(f),{\hat{x}}^{\beta}_{\alpha}\rangle=\langle f,\pi({x}^{\beta}_{\alpha})\rangle.
	\end{equation*}
	(\ref{e3.1}) implies that $	wk^*\hbox{-}\lim\limits_{\beta}a\pi({x}^{\beta}_{\alpha})-\pi({x}^{\beta}_{\alpha})a=0$. By Remark \ref{R1} and $wk^{\ast}$-continuity of $i^{\ast}_{\mathcal{A}_{\ast}}\pi_{\mathcal{A}}^{**}$ we have
	\begin{equation*}
	wk^*\hbox{-}\lim\limits_{\beta}\pi({x}^{\beta}_{\alpha})=wk^*\hbox{-}\lim\limits_{\beta}i^{\ast}_{\mathcal{A}_{\ast}}\pi_{\mathcal{A}}^{\ast\ast}({\hat{x}}^{\beta}_{\alpha})=i^{\ast}_{\mathcal{A}_{\ast}}\pi_{\mathcal{A}}^{\ast\ast}(m_{\alpha}).
	\end{equation*} 
	Let $y_{\beta}=\pi({x}^{\beta}_{\alpha})$. Then $(y_{\beta})$ is a bounded net in $\mathcal{A}$ which satisfies
	\begin{equation}\label{eq3.2}
	wk^*\hbox{-}\lim\limits_{\beta}ay_{\beta}-y_{\beta}a=0\quad\hbox{and}\quad	wk^*\hbox{-}\lim\limits_{\beta}y_{\beta}=i^{\ast}_{\mathcal{A}_{\ast}}\pi_{\mathcal{A}}^{\ast\ast}(m_{\alpha})\qquad(a\in{\mathcal{A}}).
	\end{equation}
	Suppose that $y_{\beta}=[y_{\beta}^{i,j}]$, where $y_{\beta}^{i,j}\in{\mathbb{C}}$ for every $i,j$. Fixed $i_{0}\in{I}$, for every $j\in{I}$ we have 
	\begin{equation*}
	\varepsilon_{i_0,j}y_{\beta}-y_{\beta}\varepsilon_{i_0,j}=\sum\limits_{\underset{i\neq{j}}{i\in{I}}}y_{\beta}^{j,i}\varepsilon_{i_0,i}+(y_{\beta}^{j,j}-y_{\beta}^{i_0,i_0})\varepsilon_{i_0,j}-\sum\limits_{\underset{i\neq{i_0}}{i\in{I}}}y_{\beta}^{i,i_0}\varepsilon_{i,j},
	\end{equation*}
	where $\varepsilon_{i,j}$ is a matrix belongs to $\mathbb{M}_{I}(\mathbb{C})$ which $(i,j)$-th entry is $1$ and others are zero. Let $$X_{\beta}=[X^{i,j}_\beta]=\varepsilon_{i_0,j}y_{\beta}-y_{\beta}\varepsilon_{i_0,j}.$$ 
	 (\ref{eq3.2}) implies that
	 $wk^*\hbox{-}\lim\limits_{\beta}X_{\beta}=0$. Consider $\varepsilon_{i_0,j}$, $\varepsilon_{i_0,i}$ as elements in $\mathcal{A}_*$, whenever $i\neq j$ in ${I}$. So
	\begin{equation}\label{eq3.3}
	\lim\limits_{\beta}y_{\beta}^{j,j}-y_{\beta}^{i_0,i_0}=\lim\limits_{\beta}X^{i_0,j}_\beta=\lim\limits_{\beta}\langle\varepsilon_{i_0,j},X_\beta\rangle=0,
	\end{equation}
	and
	\begin{equation}\label{eq3.4}
	 \lim\limits_{\beta}y_{\beta}^{j,i}=\lim\limits_{\beta}X^{i_0,i}_\beta=\lim\limits_{\beta}\langle\varepsilon_{i_0,i},X_\beta\rangle=0.
	\end{equation}
Since $\Vert{y}_{\beta}\Vert\leq\Vert{m_{\alpha}}\Vert$, $(y_{\beta}^{i_0,i_0})$ is a bounded net in $\mathbb{C}$. So it has a convergent subnet $(y_{\beta_{k}}^{i_0,i_0})$ in $\mathbb{C}$. We may assume  that $\lim\limits_{\beta_k}y_{\beta_{k}}^{i_0,i_0}=l$. By (\ref{eq3.3}) $\lim\limits_{\beta_k}y_{\beta_{k}}^{i_0,i_0}-y_{\beta_{k}}^{j,j}=0$. It follows that $\lim\limits_{\beta_{k}}y_{\beta_{k}}^{j,j}=l$ for every $j\in{I}$. If $l=0$, then by (\ref{eq3.4}) for every $i,j\in{I}$, $\lim\limits_{\beta_{k}}y_{\beta_{k}}^{i,j}=0$ in $\mathbb{C}$. So $wk$-$\lim\limits_{\beta_{k}}y_{\beta_{k}}^{i,j}=0$, where $i,j\in{I}$. Applying \cite[Theorem 4.3]{scha:71}, $wk$-$\lim\limits_{\beta_{k}}y_{\beta_{k}}=0$ in $\mathcal{A}$. It follows that $\lim\limits_{\beta_{k}}\langle{y}_{\beta_k},a\cdot{\psi}\rangle=0$. On the other hand
	\begin{equation*}
	\lim\limits_{\beta_{k}}\langle{y}_{\beta_k},a\cdot{\psi}\rangle=\lim\limits_{\beta_{k}}\langle{a}\cdot{\psi},{y}_{\beta_k}\rangle=\langle{a}\cdot{\psi},i^{\ast}_{\mathcal{A}_{\ast}}\pi_{\mathcal{A}}^{\ast\ast}(m_{\alpha})\rangle\neq0,
	\end{equation*}
	which is a contradiction. So $wk$-$\lim\limits_{\beta_{k}}y_{\beta_{k}}^{j,j}=l\neq0$ for every $j\in{I}$. Using (\ref{eq3.4}) we have $wk$-$\lim\limits_{\beta_k}y_{\beta_k}^{j,i}=0$ whenever $j\neq{i}$ in $I$. Applying \cite[Theorem 4.3]{scha:71} again, $wk$-$\lim\limits_{\beta_{k}}y_{\beta_{k}}=y_{0}$, where $y_0$ is a matrix with $l$ in the diagonal position and $0$ elsewhere. Thus $y_0\in{\overline{Conv(y_{\beta_k})}}^{wk}={\overline{Conv(y_{\beta_k})}}^{\Vert\cdot\Vert}$. So $y_0\in{\mathcal{A}}$. But 
	\begin{equation*}
	\infty=\sum\limits_{j\in{I}}\vert l\vert=\sum\limits_{j\in{I}}\vert y_0^{j,j}\vert=\Vert y_0\Vert<\infty,
	\end{equation*}
	which is a contradiction. So $I$ must be finite.\\
	Conversely, if $I$ is finite, then $\mathbb{M}_{I}(\mathbb{C})$ is Connes amenable \cite[Theorem 3.7]{Mah:2016}. So by Lemma $(\ref{l3})$, $\mathbb{M}_{I}(\mathbb{C})$ is 
	Johnson pseudo-Connes amenable. 
\end{proof}	
 Let $\mathcal{A}$ be a dual Banach algebra and let $I$ be a totally ordered set. Then the set of all $I\times{I}$-upper triangular matrices
with the usual matrix operations and the norm $\parallel[a_{i,j}]_{{i,j}\in{I}}\parallel=\sum\limits _{i,j\in{I}}\parallel{a}_{i,j}\parallel<\infty$, becomes a Banach algebra and it is denoted by
$$UP(I,\mathcal{A})=\set{\left[
	\begin{array}{rr} a_{i,j}  \end{array} \right]_{i,j\in I};  a_{i,j}\in A {\hbox{ and\, $a_{i,j}=0$ \,for every } }i>j }.$$
\begin{Theorem}\label{T7}
	Let $\mathcal{A}$ be a dual Banach algebra, $\varphi\in{\Delta_{wk^*}(\mathcal{A})}$ and let $I$ be a finite set. Then $\A$ is Johnson pseudo-Connes amenable if and only if $\mathcal{A}$ is Johnson pseudo-Connes amenable and $\vert I\vert=1$.
\end{Theorem}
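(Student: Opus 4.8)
The plan is to observe that when $\vert I\vert=1$ the algebra $\A$ is literally $\mathcal{A}$, so both implications of the equivalence are trivial in that case; in particular, for the ``only if'' direction it suffices to prove that if $\A$ is Johnson pseudo-Connes amenable then necessarily $\vert I\vert=1$, since the Johnson pseudo-Connes amenability of $\mathcal{A}$ then follows from $\A\cong\mathcal{A}$ (or, independently, from Proposition \ref{p9} applied to the $wk^*$-continuous epimorphism $\A\to\mathcal{A}$, $[a_{k,l}]\mapsto a_{i,i}$ for a fixed index $i$, which is multiplicative because the $(i,i)$-entry of a product of upper triangular matrices is the product of the $(i,i)$-entries). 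So the entire content of the theorem is the implication: $\A$ Johnson pseudo-Connes amenable $\Rightarrow\vert I\vert=1$.

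To prove this I would argue by contradiction, assuming $\vert I\vert\ge 2$. First I would record the ambient structure: for $n=\vert I\vert$, the algebra $\mathcal{B}:=\A$ is a $wk^*$-closed subalgebra of $\mathbb{M}_n(\mathcal{A})$ and hence is itself a dual Banach algebra, and for every $i\in I$ the functional $\varphi_i\colon[a_{k,l}]\mapsto\varphi(a_{i,i})$ is a $wk^*$-continuous character of $\mathcal{B}$, again because the diagonal entries of a product of upper triangular matrices multiply. Consequently, by Proposition \ref{p6}, the Johnson pseudo-Connes amenability of $\mathcal{B}$ would force $\mathcal{B}$ to be $\varphi_i$-Connes amenable for every $i\in I$.

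The key step is to show that this is impossible. Choose $i_0<i_1$ in $I$, set $\psi:=\varphi_{i_1}$, pick $a_0\in\mathcal{A}$ with $\varphi(a_0)=1$, and let $p,r\in\mathcal{B}$ be the matrices carrying $a_0$ in the slots $(i_1,i_1)$ and $(i_0,i_1)$ respectively and zero elsewhere (both lie in $\mathcal{B}$ since $i_0\le i_1$, and $\psi(p)=\varphi(a_0)=1$, $\psi(r)=\varphi(0)=0$). Define the $wk^*$-continuous functional $f\in\mathcal{B}_*\subseteq\sigma wc(\mathcal{B}^*)$ by $f(x)=\varphi(x_{i_1,i_1})+\varphi(x_{i_0,i_1})$. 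Using that $p\cdot x$ has $(i_1,l)$-entry $a_0x_{i_1,l}$ with all other rows zero, that $r\cdot x$ has $(i_0,l)$-entry $a_0x_{i_1,l}$ with all other rows zero, and that $\varphi(a_0y)=\varphi(y)$ (as $\varphi$ is multiplicative and $\varphi(a_0)=1$), one checks directly that $f\cdot p=\psi=f\cdot r$ in $\mathcal{B}^*$. Now let $m$ be the functional on $\sigma wc(\mathcal{B}^*)$ witnessing $\psi$-Connes amenability, so $m(\psi)=1$ and $m(g\cdot b)=\psi(b)m(g)$ for all $b\in\mathcal{B}$ and $g\in\sigma wc(\mathcal{B}^*)$. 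Taking $b=p$ gives $m(f)=m(f\cdot p)=m(\psi)=1$; taking $b=r$ gives $0=\psi(r)m(f)=m(f\cdot r)=m(\psi)=1$, a contradiction. Hence $\vert I\vert=1$, and the theorem follows.

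I expect the main obstacle to be purely one of \emph{finding} the right test data rather than any deep difficulty: the character has to be attached to the larger index $i_1$ so that the off-diagonal matrix $r$ is annihilated by $\psi$ and yet still transports the $(i_1,i_1)$-coordinate into the $(i_0,i_1)$-coordinate that $f$ reads off, which is exactly what forces $f\cdot p$ and $f\cdot r$ to coincide with $\psi$. Everything else — the dual Banach algebra structure of $\A$ for finite $I$, the $wk^*$-continuity of the characters $\varphi_i$, and the bimodule bookkeeping on $\mathcal{B}^*$ — is routine, and I do not anticipate a serious technical difficulty there.
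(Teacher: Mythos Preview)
Your proof is correct and follows essentially the same route as the paper: reduce via Proposition~\ref{p6} to $\psi$-Connes amenability for the character $\psi$ read off the bottom-right diagonal entry, and then derive a contradiction when $\vert I\vert\ge 2$. The only difference is that the paper outsources that final contradiction to \cite[Theorem 3.2]{Sha:2017}, whereas you supply an explicit self-contained argument with the test elements $p,r$ and the functional $f$; this makes your write-up more self-contained but is the same underlying idea.
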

\begin{proof}
	Let $\A$ be Johnson pseudo-Connes amenable. Assume that $I=\lbrace{i_{1},...,i_{n}}\rbrace$ and $\varphi$ in ${\Delta_{{wk}^{\ast}}{(\mathcal{A})}}$. We define a map $\psi:{UP(I,\mathcal{A})}\longrightarrow\mathbb{C}$ by          $\left[ a_{i,j}\right] _{i,j\in{I}}\longmapsto\varphi{(a_{i_{n},i_{n}})}$ for every $\left[ a_{i,j}\right] _{i,j\in{I}}\in{{UP(I,\mathcal{A})}}$.
	Since $\varphi$ is $wk^\ast$-continuous, $\psi\in{\Delta_{{wk}^{\ast}}{(UP(I,\mathcal{A}))}}$. By Proposition \ref{p6}, ${UP(I,\mathcal{A})}$ is $\psi$-Connes amenable. By similar argument as in \cite[Theorem 3.2]{Sha:2017}, we have $\lvert{I}\rvert=1$.
	
	Converse is clear.
\end{proof}
%%%%%%%%%%%%%%%%%%%%%%%%%%%%%%%%%%%%%%%%
%-------------------------------------------------------------------------------%%%%%%%%%%%%%%%%%%%%%%%%%%%%%%%%%%%%%%%%
\section{Examples}
\begin{Example}
		Consider the Banach algebra $\ell^1$ of all sequences $a=(a_n)$ of complex numbers with 
	\begin{equation*}
	\Vert{a}\Vert=\sum\limits_{n=1}^\infty\vert a_n\vert<\infty,
	\end{equation*}
	and the following product
	\begin{equation*}
	(a\ast b)(n)=\left\{
	\begin{array}{ll}
	a(1)b(1)& \hbox{if}\quad n=1\\
	a(1)b(n)+b(1)a(n)+a(n)b(n)&\hbox{if}\quad n>1
	\end{array}
	\right.
	\end{equation*}
	for every $a,b\in{\ell^1}$. It is easy to see that $\Delta(\ell^1)=\{\varphi_1\}\cup\{\varphi_1+\varphi_n:n\geq2\}$, where $\varphi_n(a)=a(n)$ for every $a\in{\ell^1}$. We claim that $(\ell^1,*)$ is a dual Banach algebra with respect to $c_0$. It is clear that $c_0$ is a closed subspace of $\ell^\infty$. We show that $c_0$ is an $\ell^1$-module with dual actions. For every $a\in{\ell^1}$ and $\lambda\in{c_0}$ we have
	\begin{equation*}
	a\cdot\lambda(n)=\left\{
	\begin{array}{ll}
	\sum\limits_{k=1}^\infty a(k)\lambda(k)& \hbox{if}\quad n=1\\
	(a(1)+a(n))\lambda(n)&\hbox{if}\quad n>1.
	\end{array}
	\right.
	\end{equation*}
	Since $\lambda$ vanishes at infinity and $\underset{n}{\sup}\vert a(n)\vert<\infty$, one can see that $a\cdot\lambda$ vanishes at infinity. So $a\cdot\lambda\in{c_0}$ and  similarity for the right action. We claim that $\ell^1$ is not Johnson pseudo-Connes amenable. Suppose conversely that $\ell^1$ is Johnson pseudo-Connes amenable. Since $\varphi_1$ is $wk^*$-continuous, by Proposition \ref{p6}, $\ell^1$ is $\varphi_1$-Connes amenable. Using \cite[Proposition 3.1]{Sha:2017} and by similar argument as in \cite[Theorem 3.2]{Sha:2017} there is a bounded net $(m_{\alpha})$ in $\ell^1$ that satisfies
\begin{equation}\label{e1}
a*{m_{\alpha}}-\varphi_1(a){m_{\alpha}}{\overset{wk^\ast}{\longrightarrow}}0\quad\hbox{and}\quad\varphi_1(m_{\alpha})\longrightarrow1\qquad({a}\in{\ell^1}).
\end{equation}
Choose $a=\delta_{n}$ in $\ell^1$, where $n\geq2$. So $\varphi_1(\delta_{n})=0$, using (\ref{e1}) we have  $\delta_{n}*{m_{\alpha}}{\overset{wk^\ast}{\longrightarrow}}0$ in $\ell^1$. One can see that  $\delta_{n}\ast m_\alpha=({m}_\alpha(1)+{m}_\alpha(n))\delta_{n}$. Consider $\delta_{n}$ as an element in $c_0$, where $n\geq2$. So
\begin{equation*}
\lim\limits_{\alpha}\langle\delta_{n},\delta_{n}\ast m_\alpha\rangle=\lim\limits_{\alpha}{m}_\alpha(1)+{m}_\alpha(n)=0.
\end{equation*}
Since $\lim\limits_{\alpha}m_{\alpha}(1)=1$ and $\lim\limits_{\alpha}m_{\alpha}(n)=-1$ for every $n\geq2$, we have $\underset{\alpha}{\sup}\Vert m_\alpha\Vert=\infty$, which  contradicts  the boundedness of the net $(m_{\alpha})$. 
\end{Example}
\begin{Example}
	Set $\mathcal{A}=\left(\begin{array}{cc} 0&\mathbb{C}\\
0&\mathbb{C}\\
\end{array}
\right)$. With the usual matrix multiplication and $\ell^{1}$-norm, $\mathcal{A}$ is a Banach algebra. Since $\mathbb{C}$ is a dual Banach algebra, $\mathcal{A}$ is a dual Banach algebra. Suppose that $\mathcal{A}$ is Johnson pseudo-Connes amenable. Then there is a net $(m_{\alpha})$ in $(\mathcal{A}\hat{\otimes}\mathcal{A})^{**}$ such that $\langle{T},a\cdot{m_{\alpha}}\rangle=\langle{T},{m_{\alpha}}\cdot{a}\rangle$ and $i^{\ast}_{\mathcal{A}_{\ast}}\pi_{\mathcal{A}}^{**}(m_{\alpha})a\rightarrow{a}$ for every $a\in{\mathcal{A}}$ and $T\in\sigma{wc}(\mathcal{A}\hat{\otimes}\mathcal{A})^{\ast}$. Let $M_{\alpha}=i^{\ast}_{\mathcal{A}_{\ast}}\pi_{\mathcal{A}}^{**}(m_{\alpha})$. By Remark \ref{R1} we have $i^{\ast}_{\mathcal{A}_{\ast}}\pi^{**}_{\mathcal{A}}=\pi_{\sigma{wc}}q$, where $q:(\mathcal{A}\hat{\otimes}\mathcal{A})^{\ast\ast}\longrightarrow(\sigma{wc}({\mathcal{A}}\hat{\otimes}{\mathcal{A}})^{\ast})^{\ast}$ is the quotient map as in the Remark \ref{qou}. Since $\pi_{\sigma{wc}}$ is an $\mathcal{A}$-bimodule morphism, for every $a\in{\mathcal{A}}$ we have
\begin{equation*}
\begin{split}
a M_{\alpha}&=a\pi_{\sigma{wc}}q(m_{\alpha})=\pi_{\sigma{wc}}q(a\cdot m_{\alpha})=\pi_{\sigma{wc}}(a\cdot m_{\alpha}\vert_{\sigma{wc}({\mathcal{A}}\hat{\otimes}{\mathcal{A}})^{\ast}})\\&=\pi_{\sigma{wc}}(m_{\alpha}\cdot a\vert_{\sigma{wc}({\mathcal{A}}\hat{\otimes}{\mathcal{A}})^{\ast}})=\pi_{\sigma{wc}}q(m_{\alpha}\cdot a)=\pi_{\sigma{wc}}q(m_{\alpha})a\\&=M_{\alpha}a.
\end{split}
\end{equation*}
So $(M_{\alpha})$ is a net in $\mathcal{A}$ satisfies $a M_{\alpha}=M_{\alpha}a$ and $M_{\alpha}a\rightarrow a$. Define the map $\phi:\mathcal{A}\longrightarrow\mathbb{C}$ by $$\phi\left(\begin{array}{cc} 0&a\\
0&b\\
\end{array}
\right)=b,\qquad(a,b\in{\mathbb{C}}).$$ It is clear that $\phi$ is linear and multiplicative and also for every $X,Y\in{\mathcal{A}}$ we have $XY=X\phi(Y)$. Choose $X\in{\mathcal{A}}$ such that $\phi(X)=1$. So $X=\lim\limits_{\alpha}M_{\alpha}X=\lim\limits_{\alpha}M_{\alpha}$. One can see that $X$ is a unit for $\mathcal{A}$. So for every $Y\in{\mathcal{A}}$ we have $Y=XY=X\phi(Y)$. So $\dim{\mathcal{A}}=1$, which is a contradiction.
\end{Example}

%%%%%%%%%%%%%%%%%%%%%%%%%%%%%%%%%%%%%%%%%%%%%
\begin{small}
	
\end{small}

\begin{thebibliography}{99}
\bibitem{Con:85} J. B. Conway; {\it A course in functional analysis}, Springer-Verlag, New York, 1985.
\bibitem{eslam:04} G. H. Esslamzadeh; {\it Double centralizer algebras of certain Banach algebras}, Monatsh. Math. {\bf 142} (2004), 193-203.
\bibitem{Eslam:2012} G. H. Esslamzadeh, B. Shojaee and A. Mahmoodi; {\it Approximate Connes-amenability of dual Banach algebras}, Bull.
Belg. Math. Soc. Simon stevin. {\bf 19} (2012), 193-213.
	  \bibitem{kelley:75} J. L. Kelley; {\it General topology}, Springer-Verlag, New York, 1975.
	 \bibitem{Mah:2016} A. Mahmoodi; {\it Beurling and matrix algebras, (approximate) Connes-amenability}, U.P.B. Sci. Bull. Series A. {\bf 78}. Iss. 1 (2016).
	\bibitem{Mahmoodi:14} A. Mahmoodi; {\it Connes amenability-like properties}, Studia. Math. {\bf 220} (2014), 55-72.
	\bibitem{Mahmoodi:2014} A. Mahmoodi; {\it On $\varphi$-Connes amenability for dual Banach algebras}, J. Linear Topol. Algebra. {\bf 03} (2014), 211-217.
   	\bibitem{ram:18} M. Ramezanpour; {\it Character Connes amenability of dual Banach algebras}, Czech Math. J. {\bf 68} (2018), 243-255.
   	\bibitem{Runde:2001} V. Runde; {\it Amenability for dual Banach algebras}, Studia. Math. {\bf 148} (2001), 47-66.
   	\bibitem{Runde:2003} V. Runde; {\it Connes-amenability and normal, virtual
	diagonals for measure algebras I}, J. London Math. Soc. {\bf 67} (2003), 643-656.	
\bibitem{Runde:2004} V. Runde; {\it Dual Banach algebras: Connes-amenability, normal, virtual diagonals, and injectivity of the predual bimodule}, Math. Scand. {\bf 95} (2004), 124-144.
	\bibitem{Runde:2002} V. Runde; {\it Lectures on amenability}, Lecture Notes in Mathematics, Vol. 1774, Springer-Verlag, Berlin, 2002.
\bibitem{saham:17} A. Sahami and A. Pourabbas; {\it Johnson pseudo-contractibility of certain semigroup algebras}, Semigroup Forum. https://doi.org/10.1007/s00233-017-9912-3 (2017).
\bibitem{Sahami:2017} A. Sahami and A. Pourabbas; {\it Johnson pseudo-contractibility of various classes of Banach algebras}, Bull.
	Belg. Math. Soc. Simon Stevin, to appear.
\bibitem{scha:71} H. H. Schaefer; {\it Topological vector space}, Springer-Verlag, New York, 1971.
 \bibitem{Sha:2017} S. F. Shariati, A. Pourabbas and A. Sahami; {\it On Connes amenability of upper triangular matrix algebras} (Preprint). https://arxiv.org/abs/1801.03378.
   \bibitem{Shi:2016} A. Shirinkalam and A. Pourabbas; {\it Connes-biprojective dual Banach algebras}, U.P.B. Sci. Bull. Series A. {\bf 78}. Iss. 3 (2016).
\bibitem{shoj:09} B. Shojaee, G. H. Esslamzadeh and A. Pourabbas; {\it First order cohomology of $\ell^1$-Munn algebras and certain semigroup algebras}, Bull. Iranian Math. Soc. {\bf 35} (2009), 211-219.
		

	\end{thebibliography}
\end{document}